\let\mathcal\mathscr
\def\bC{{\mathbb C}}
\def\bD{{\mathbb D}}
\def\bP{{\mathbb P}}
\def\bN{{\mathbb N}}
\def\bQ{{\mathbb Q}}
\newtheorem{thm}{Theorem}[section]
\def\N{{\bf N}}
\def\tilde{\widetilde}
\def\phi{\varphi}
\numberwithin{equation}{section}
\newtheorem{theorem}[thm]{Theorem}
\newtheorem*{thma}{Theorem A}
\newtheorem*{thmb}{Theorem B}
\newtheorem*{thmc}{Theorem C}
\newtheorem{conjecture}[thm]{Conjecture}
\newtheorem{corollary}[thm]{Corollary}
\newtheorem{definition}[thm]{Definition}
\newtheorem{example}[thm]{Example}
\newtheorem{problem}[thm]{Problem}
\newtheorem{proposition}[thm]{Proposition}
\newtheorem{question}[thm]{Question}
\newtheorem{remark}[thm]{Remark}
\begin{document}

\title {Degeneracy of holomorphic maps via orbifolds}
\author{Erwan Rousseau}
\date{}

\begin{abstract}
We use orbifold structures to deduce degeneracy statements for holomorphic maps into logarithmic surfaces. We improve former results in the smooth case and generalize them to singular pairs. In particular, we give applications on nodal surfaces and complements of singular plane curves. 
\end{abstract}

\maketitle

\section{Introduction}
It is now classical that the properties of holomorphic maps in compact complex manifolds are closely related to the properties of the canonical line bundle. More precisely, one can expect following Green-Griffiths that the following is true
\begin{conjecture}
Let $X$ be a projective manifold of general type, i.e. its canonical line bundle $K_X$ is big. Then there exists a proper subvariety $Y\subsetneq X$ which contains every non-constant entire curve $f:\bC \to X$.
\end{conjecture}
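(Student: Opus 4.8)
The statement is the Green--Griffiths conjecture, which is open in general; what follows is therefore a strategy rather than a proof --- the one underlying essentially all known partial results, and the starting point for the orbifold techniques developed below.

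The plan is to work with jet differentials. For integers $k,m\ge 1$ let $E_{k,m}T_X^*$ denote the bundle whose sections are the reparametrization-invariant polynomial differential operators $P(f',f'',\dots,f^{(k)})$ of weighted degree $m$ --- either Demailly's invariant jet bundle or the Green--Griffiths variant $E^{GG}_{k,m}T_X^*$. The engine is the \emph{fundamental vanishing theorem} (Green--Griffiths, Demailly, Siu--Yeung): if $A$ is an ample divisor and $\omega\in H^0(X,\,E_{k,m}T_X^*\otimes\cO_X(-A))$, then every non-constant entire curve $f:\bC\to X$ satisfies $\omega(f)\equiv 0$, and hence $\im f\subset\Base(\omega)$. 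It therefore suffices to produce many such $\omega$ and then to show that the intersection of their base loci is a proper subvariety.

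First I would establish existence. Applying Demailly's holomorphic Morse inequalities to the tower of $k$-jets $X_k\to X$ --- or a direct Euler characteristic estimate --- one shows that, $K_X$ being big, for $k$ large compared to $n=\dim X$ the dimension of $H^0(X,\,E_{k,m}T_X^*\otimes\cO_X(-A))$ grows like a positive multiple of a suitable power of $m$ as $m\to\infty$; the bigness of $K_X$ is precisely what keeps the leading term of the Euler characteristic positive after the ample twist is removed. This yields an abundant supply of algebraic differential equations satisfied by every entire curve in $X$.

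Now let $Y\subset X$ be the intersection, over all $k$, $m$ and all ample $A$, of the base loci produced above; this is a closed algebraic subset, and by the vanishing theorem it contains the image of every non-constant entire curve, so the conjecture reduces to the assertion $Y\subsetneq X$. This last point is the genuine obstacle: a priori nothing prevents these base loci from exhausting $X$, and excluding that requires extra geometric input. For surfaces one can bring in Bogomolov's argument for symmetric differentials once $c_1^2>c_2$, or --- to reach an arbitrary surface of general type --- McQuillan's tautological inequality together with the structure theory of the foliation cut out by a $2$-jet differential, applied to the Ahlfors--Nevanlinna current attached to $f$; in higher dimension controlling $Y$ is open. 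Obtaining such control in the logarithmic surface setting, by passing to a well-chosen orbifold, is exactly the purpose of the constructions that follow.
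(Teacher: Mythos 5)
This statement is the Green--Griffiths conjecture, which the paper records as an open \emph{conjecture} and does not prove; no proof exists in the literature. You correctly recognize this and offer only a strategy, and the strategy you describe --- produce jet differentials vanishing on an ample divisor via Riemann--Roch/Morse inequalities, apply the fundamental vanishing theorem, then control the base locus by Bogomolov/McQuillan-type arguments on surfaces --- is exactly the framework the paper adapts to the orbifold setting for its partial results (Theorems A, B, C). Nothing to correct.
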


It can be observed that positivity properties of the canonical bundle can be generalized to a more general situation than the usual compact setting, and still give properties of degeneracy for holomorphic maps. For example, a classical result of Nevanlinna is
\begin{theorem}[\cite{Nev}]
Let $a_1,\dots,a_k \in \bP^1$ and $m_1,\dots,m_k \in \N\cup{\infty}$. If 
$$\sum_{i=1}^k\left(1-\frac{1}{m_i}\right) > 2,$$
then every entire curve $f:\bC\to \bP^1$ which is ramified over $a_i$ with multiplicity at least $m_i$ is constant.
\end{theorem}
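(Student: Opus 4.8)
The plan is to run the classical curvature argument in its orbifold form, which is exactly the philosophy of this paper. On $\bP^1$ I would construct a conformal pseudo-metric $\omega$ adapted to the data $(a_i,m_i)$: near $a_i$ with $m_i<\infty$ it should be a hyperbolic-type cone metric of total angle $2\pi/m_i$, i.e. locally $\omega=\lambda\,|z-a_i|^{2(\frac{1}{m_i}-1)}|dz|^2$ with $\lambda$ smooth and positive, and near $a_i$ with $m_i=\infty$ a cusp model $\omega\sim |z-a_i|^{-2}(\log|z-a_i|)^{-2}|dz|^2$. The numerical hypothesis is precisely the condition that the orbifold Euler characteristic $\chi^{\mathrm{orb}}=2-\sum_i\bigl(1-\frac{1}{m_i}\bigr)$ is strictly negative; by orbifold uniformization (equivalently, by solving the associated Liouville equation with the prescribed singular boundary behaviour, or by gluing the explicit hyperbolic cone/cusp models to a background metric and correcting the curvature) one can arrange that the Gaussian curvature of $\omega$ is bounded above by $-\kappa<0$ wherever $\omega$ is smooth and positive.

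Next I would pull $\omega$ back by $f$ and check that $f^{*}\omega$ is a genuine (continuous, in fact smooth) conformal pseudo-metric on all of $\bC$; this is where the ramification hypothesis enters. Away from the preimages of the $a_i$ this is immediate. If $f(p)=a_i$ with $m_i<\infty$, write $f(w)-a_i=(w-p)^{k}g(w)$ with $g(p)\neq 0$ and $k\ge m_i$ by hypothesis; then near $p$ the local weight of $f^{*}\omega$ is comparable to $|f-a_i|^{2(\frac{1}{m_i}-1)}|f'|^{2}\sim |w-p|^{2(k/m_i-1)}$ times a bounded positive factor, so the exponent $k/m_i-1\ge 0$ shows $f^{*}\omega$ extends smoothly across $p$, with at worst a zero at $p$ when $k>m_i$. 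The case $m_i=\infty$ is handled the same way: $f$ either omits $a_i$ — and then the cusp weight pulls back to something bounded — or hits it, in which case it does so with infinite multiplicity, forcing $f$ constant. At every point where $f^{*}\omega>0$ its curvature equals the curvature of $\omega$ at the image point, hence is $\le-\kappa$.

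Finally I would invoke the Ahlfors–Schwarz lemma in the form: a conformal pseudo-metric on $\bC$ whose Gaussian curvature is $\le-\kappa<0$ wherever it is positive must vanish identically. Applied to $f^{*}\omega$ this gives $f^{*}\omega\equiv 0$, hence $f$ is constant. The main obstacle is the analytic input in the first paragraph: producing the orbifold metric with curvature bounded above by a \emph{strictly} negative constant, i.e. genuinely exploiting $\chi^{\mathrm{orb}}<0$ and not merely $\le 0$ — one must either cite orbifold uniformization or do the gluing by hand and verify that the corrected metric stays negatively curved globally. A secondary, routine but necessary point is the regularity check for $f^{*}\omega$ at ramification points, where it is the inequality $k\ge m_i$ (rather than equality) that must be seen to still yield an admissible pseudo-metric.
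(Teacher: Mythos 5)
Your argument is correct, but note that the paper offers no proof of this statement at all: it is quoted from Nevanlinna, whose own argument is value-distribution-theoretic, namely the Second Main Theorem with the ramification term, from which the defect-type relation $\sum_i\bigl(1-\frac{1}{m_i}\bigr)\le 2$ for a non-constant $f$ follows directly. Your route is the differential-geometric counterpart: build a conformal metric on $\bP^1$ with conic singularities of angle $2\pi/m_i$ at the $a_i$ (cusps for $m_i=\infty$) and curvature $\le-\kappa<0$, check that the ramification hypothesis $k\ge m_i$ makes the pullback exponent $2(k/m_i-1)\ge 0$ so that $f^{*}\omega$ is an admissible pseudo-metric on $\bC$, and conclude by Ahlfors--Schwarz. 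This is sound; the one genuine input you must supply or cite is the existence of the negatively curved orbifold metric, and here the hypothesis saves you: since each term $1-\frac{1}{m_i}$ is $<1$, the inequality forces $k\ge 3$ marked points, so the orbifold is good (no teardrops or spindles) and is uniformized by the disk, giving the complete hyperbolic cone/cusp metric of curvature $-1$. Two small imprecisions worth flagging: at a point with $k>m_i$ and $k/m_i\notin\bN$ the pullback is only continuous, not smooth, so you should invoke the Ahlfors--Schwarz lemma for upper semicontinuous pseudo-metrics with curvature bounded above in the supporting sense; and the conclusion $f^{*}\omega\equiv 0$ gives $f'\equiv 0$ only on the open dense set where $f$ avoids the $a_i$, after which the identity theorem finishes. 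Compared with Nevanlinna's proof, yours is more elementary in that it avoids the Second Main Theorem, and it is closer in spirit to the orbifold-hyperbolicity philosophy of the paper; the value-distribution proof, on the other hand, yields quantitative defect information rather than just the qualitative degeneracy statement.
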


Following Green-Griffiths' philosophy, this degeneracy property should correspond to the positivity property of some canonical line bundle. Here one easily observes that the right canonical line bundle to consider is
$$K_{\bP^1}+\sum_{i=1}^k\left(1-\frac{1}{m_i}\right)a_i,$$
which can be seen as the canonical line bundle of the pair $(\bP^1,\Delta)$ where $\Delta=\sum_{i=1}^k\left(1-\frac{1}{m_i}\right)a_i.$

More generally, following Campana \cite{C04}, a pair $(X,\Delta)$, consisting of a complex manifold $X$ and a $\mathbb{Q}$-divisor $\Delta=\sum_{i=1}^k\left(1-\frac{1}{m_i}\right)Z_i$, is called a geometric orbifold. Positivity properties of the orbifold canonical line bundle, $K_X+\Delta$, should provide degeneracy statements for orbifold entire curves $f:\bC \to X$, i.e. entire curves which ramifies with multiplicity at least $m_i$ over $Z_i$. 

In this paper we shall study the case of surfaces improving and generalizing results of a previous work \cite{Rou08}. The point of view we adopt here consists in working with the different notions of {\em orbifolds} that have appeared in the literature: the $V$-manifolds of Satake, the orbifolds of Thurston, the algebraic stacks of Grothendieck, Deligne and Mumford, and the geometric orbifolds of Campana.

In particular, as initiated in \cite{CP}, we extend to the orbifold setting the strategy of Bogomolov \cite{Bo77} which uses symmetric differentials to obtain hyperbolicity properties for surfaces which satisfy $c_1^2-c_2>0$. More precisely, we use Kawasaki-To\"en's Riemann-Roch formula on stacks (\cite{Ka}, \cite{To}) to produce orbifold symmetric differentials.

Then, in the case of smooth (geometric) orbifolds (i.e. $X$ is smooth and $\lceil \Delta \rceil$, the support of $\Delta$, is a normal crossing divisor), we obtain using moreover McQuillan's techniques \cite{McQ0} as in \cite{Rou08}
\begin{thma}
Let  $(X,\Delta)$ be a smooth projective orbifold surface of general type, i.e $K_{X}+\Delta$ is big, $\Delta=\sum_{i} (1-\frac{1}{m_{i}})C_{i}$. Denote $g_{i}:=g(C_{i})$ the genus of the curve $C_{i}$ and $\overline{c}_{1}, \overline{c}_{2}$ the logarithmic Chern classes of $(X,\lceil \Delta \rceil).$
If
\begin{equation}\label{ineq}
\overline{c}_{1}^2-\overline{c}_{2}-\sum_{i=1}^{n}\frac{1}{m_{i}}(2g_{i}-2+\sum_{j\neq i}C_{i}C_{j}) +\sum_{1\leq i \leq j \leq n}\frac{C_{i}C_{j}}{m_{i}m_{j}}>0,
\end{equation}
then there exists a proper subvariety $Y\subsetneq X$ such that every non-constant entire curve $f: \bC \rightarrow X$ which is an orbifold morphism, i.e ramified over $C_{i}$ with multiplicity at least $m_{i}$, verifies $f(\bC) \subset Y$.
\end{thma}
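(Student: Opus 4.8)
The strategy is the orbifold analogue of the Bogomolov--McQuillan approach to hyperbolicity of surfaces, following the scheme of \cite{CP} and \cite{Rou08}. Associated to the smooth geometric orbifold $(X,\Delta)$ one has a Deligne--Mumford stack $\mathcal{X}$ (a $V$-manifold in the sense of Satake) whose coarse space is $X$ and which carries, along the component $C_i$, the cyclic stabilizer $\mu_{m_i}$. Orbifold symmetric differentials of $(X,\Delta)$ are, by definition, sections of $\Sym^m\Omega_{\mathcal{X}}$ on this stack. The first step is to \emph{produce such sections in abundance}: one computes $\chi(\mathcal{X},\Sym^m\Omega_{\mathcal{X}})$ by Kawasaki--To\"en Riemann--Roch on stacks (\cite{Ka}, \cite{To}). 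The leading term in $m$ is $\frac{m^3}{6}(\overline{c}_1^2-\overline{c}_2)$ corrected by the fractional contributions of the twisted sectors along each $C_i$; collecting these corrections is exactly what produces the left-hand side of \eqref{ineq}. Combining this with a vanishing/bound for $h^2$ (estimated via Serre duality on the stack, using bigness of $K_X+\Delta$ so that $h^2(\Sym^m\Omega_{\mathcal{X}}) = h^0(\Sym^m T_{\mathcal{X}}\otimes K_{\mathcal{X}})$ grows subcubically), one concludes that $h^0(\mathcal{X},\Sym^m\Omega_{\mathcal{X}})$ grows like $cm^3$ with $c>0$ under hypothesis \eqref{ineq}. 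Equivalently, there is a nonzero orbifold symmetric differential $\omega$, i.e. a section of $\Sym^m\Omega_X(\log\lceil\Delta\rceil)$ with prescribed vanishing (poles bounded) along the $C_i$ reflecting the multiplicities $m_i$.

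The second step is the \emph{tautological inequality in the orbifold setting}. An entire curve $f:\bC\to X$ which is an orbifold morphism (ramified to order $\geq m_i$ over $C_i$) lifts to an orbifold map to $\mathcal{X}$, hence to a map into the projectivized orbifold tangent bundle $\bP(T_{\mathcal{X}})$ whose image, if $f$ is Zariski-dense, satisfies McQuillan's tautological inequality: the logarithmic derivative $f'$ takes values compatibly with the orbifold structure, so the pullback of the orbifold symmetric differential $\omega$ vanishes. This is where the orbifold ramification condition is used in an essential way: it is precisely what makes $f^*\omega$ holomorphic and bounded, and then the logarithmic derivative lemma together with the negativity of the relevant bundle on the curve forces $f^*\omega\equiv 0$. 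Thus every Zariski-dense orbifold entire curve is a leaf of the (multi-)foliation cut out by the zero locus of the symmetric differentials constructed in Step 1.

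The third step is McQuillan's argument excluding entire curves tangent to a holomorphic foliation on a surface of general type \cite{McQ0}: one shows the foliation, pulled back to the stack (or equivalently read on a suitable log resolution/adapted cover of $(X,\lceil\Delta\rceil)$), cannot have an everywhere-dense leaf, using the Ahlfors currents / index theorem for foliations and the positivity of $K_X+\Delta$. This confines $f(\bC)$ to a proper algebraic subvariety $Y\subsetneq X$ (a union of the algebraic leaves and of the subvarieties where the constructed differentials degenerate), which is the conclusion.

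The main obstacle is Step 1, the Riemann--Roch bookkeeping on the stack: one must carefully identify the twisted sectors of $\mathcal{X}$ along each $C_i$ (including their self-intersections and the genus terms $g_i$ and the mutual intersections $C_iC_j$), evaluate the equivariant Todd and Chern contributions of $\Sym^m\Omega_{\mathcal{X}}$ on each sector, and sum the resulting fractional series in $1/m_i$ to recover exactly the expression in \eqref{ineq}; getting every cross-term $C_iC_j/(m_im_j)$ and every $\tfrac{1}{m_i}(2g_i-2+\sum_{j\neq i}C_iC_j)$ with the right sign is the delicate part. A secondary difficulty is making the tautological inequality and McQuillan's foliation argument rigorous on a singular/twisted space; here one passes to an adapted finite cover or a log resolution where $(X,\lceil\Delta\rceil)$-differentials become honest differentials, as in \cite{CP}, \cite{Rou08}.
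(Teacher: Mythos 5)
Your overall strategy coincides with the paper's: Kawasaki--To\"en Riemann--Roch on the associated Deligne--Mumford stack to produce orbifold symmetric differentials under the numerical hypothesis, then the logarithmic derivative lemma and McQuillan's techniques to confine orbifold entire curves to a proper subvariety (the second half being exactly Theorem \ref{degen}, imported from \cite{Rou08}). Two points in your write-up are off, though neither is fatal to the strategy. First, the correction terms in (\ref{ineq}) do \emph{not} come from the twisted sectors of the inertia stack. The paper's key simplification is that all twisted sectors (lying over the $C_i$ and over the points of $C_i\cap C_j$) contribute only $O(N^2)$ to $\chi(\mathcal{X},S^N\Omega_{\mathcal{X}})$ for dimension reasons, so the leading term is $\frac{N^3}{6}(c_1^2-c_2)$ with $c_1,c_2$ the \emph{orbifold} Chern classes of $\mathcal{X}$. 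The fractional terms in $1/m_i$ and $1/(m_im_j)$ arise afterwards, when one converts $c_1^2-c_2$ into logarithmic Chern classes: $c_1^2=(K_X+\Delta)^2$ since $K_{\mathcal{X}}=\pi^*(K_X+\Delta)$, and $c_2$ is evaluated by To\"en's Gauss--Bonnet formula over the stratification of $X$ by ramification order. Your plan of ``summing the fractional series over the twisted sectors'' would, if carried out, merely confirm that those contributions are negligible; the Chern class conversion is still needed and is where the bookkeeping actually lives.

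Second, a non-classical orbifold morphism $f:\bC\to(X,\Delta)$ --- ramified with multiplicity \emph{at least} $m_i$, not necessarily an exact multiple of $m_i$ --- does \emph{not} in general lift to the stack $\mathcal{X}$; only classical orbifold morphisms admit such a lift, and Theorem A concerns the non-classical ones. So the step ``lifts to an orbifold map to $\mathcal{X}$, hence to $\bP(T_{\mathcal{X}})$'' fails as stated. The correct mechanism, which you also mention in the same breath, is that sections of $S^N\Omega_{(X,\Delta)}=\pi_*S^N\Omega_{\mathcal{X}}$ pull back holomorphically under \emph{any} orbifold morphism (Campana's characterization); one then runs the tautological inequality and McQuillan's foliation argument on $X$ itself (or on a log resolution), not on the stack. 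With these two corrections your proof is the paper's proof.
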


One advantage of this new approach is that we can generalize it to the singular case, for example when the orbifold surface $(X,\Delta)$ is Kawamata log terminal, following the terminology of the Mori Program (see for example \cite{Mat}). This point of view unifies several former results (e.g. \cite{CG}, \cite{GraPe} and \cite{BoDe}) where people have noticed that singularities can help to prove degeneracy statements on holomorphic maps. The key point here is to realize that singularities help to produce orbifold symmetric differentials on the stack associated to the orbifold.

As applications, we obtain as a first example (compare with \cite{CG} and \cite{GraPe})
\begin{thmb}
Let $C \subset \bP^2$ be a curve of degree $d\geq4$ with $n$ nodes and $c$ cusps. If
$$-d^2-15d+\frac{75}{2}+\frac{1079}{96}c+6n>0,$$
then there exists a curve $D\subset \bP^2$ which contains any non-constant entire curve $f:\bC \to \bP^2\setminus C$.
\end{thmb}

The above numerical conditions should be seen as the equivalent of $c_1^2-c_2>0$ in the orbifold setting.
A second example is the case of nodal surfaces $X\subset \bP^3$ of general type of degree $d$ with $l$ nodes where we recover a result of \cite{BoDe} giving the existence of orbifold symmetric differentials as soon as $l>\frac{8}{3}(d^2-\frac{5}{2}d),$ which is unfortunately not satisfied for $d=5$ where the maximum number of nodes is $31$.

We extend our study to higher order orbifold jet differentials and obtain, towards the existence of an hyperbolic quintic,
\begin{thmc}
Let $X\subset \bP^3$ be a nodal quintic with the maximum number of nodes, $31$. Then every classical orbifold entire curve satisfies an algebraic differential equation of order $3$.
\end{thmc}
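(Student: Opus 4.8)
The plan is to apply the orbifold version of the Bogomolov/McQuillan strategy to the stack $\mathcal{X}$ associated to the nodal quintic $X\subset\bP^3$ with its $31$ nodes, but now with \emph{higher-order} jet differentials rather than just symmetric differentials. First I would set up the orbifold: a node of a surface in $\bP^3$ is an $A_1$-singularity, i.e. a quotient singularity $\bC^2/(\pm 1)$, so the natural smooth Deligne--Mumford stack $\mathcal{X}$ covering $X$ has $\bZ/2$-stabilizers at the $31$ points lying over the nodes. The strategy of Theorem B and of the nodal-surface discussion just above shows that the relevant numerical input is a Kawasaki--To\"en Riemann--Roch computation on $\mathcal{X}$; for order-one jets this already gave symmetric differentials once $l>\frac{8}{3}(d^2-\frac{5}{2}d)$, a bound violated for $d=5$, $l=31$. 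So the first real step is to redo this Riemann--Roch estimate for the bundle of order-$k$ orbifold jet differentials $E_{k,m}\,\Omega_{\mathcal{X}}$ (in the sense of Green--Griffiths, adapted to stacks), extracting the leading term in $m$ as a polynomial in the Chern numbers of $\mathcal{X}$ together with the stacky contributions of the $31$ orbifold points, and checking that for $k=3$, $d=5$, $l=31$ this leading term is strictly positive, so that $H^0(\mathcal{X},E_{3,m}\Omega_{\mathcal{X}}\otimes A^{-1})\neq 0$ for suitable ample $A$ and $m\gg 0$.

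Once the existence of a nonzero global order-$3$ orbifold jet differential $\omega$ vanishing on an ample divisor is established, the second step is the standard ``fundamental vanishing theorem'' argument: any classical orbifold entire curve $f:\bC\to X$ — meaning an entire curve lifting to the coarse space and ramifying to order $\geq 2$ over the nodal locus in the appropriate sense, so that it induces a morphism (or at least a map tame enough) to the stack $\mathcal{X}$ — pulls back $\omega$ to an object that must vanish identically, by the Schwarz lemma / logarithmic derivative lemma applied on $\mathcal{X}$ as in McQuillan \cite{McQ0} and \cite{Rou08}. This forces $f$ to satisfy the algebraic differential equation $\omega(f,f',f'',f''')\equiv 0$ of order $3$, which is exactly the assertion.

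The main obstacle I expect is the Riemann--Roch bookkeeping on the stack in the higher-order case: Kawasaki--To\"en RR expresses $\chi(\mathcal{X},E_{k,m}\Omega_{\mathcal{X}})$ as an integral over the inertia stack, so besides the main component (which reproduces the classical Green--Griffiths leading term $\frac{m^{3k-1}}{(3k-1)!}\cdot(\text{const})\cdot(13c_1^2-9c_2)$-type expression for surfaces) there are twisted-sector contributions from each $\bZ/2$-fixed point, and one must compute these twisted contributions to order-$3$ jets and verify that $31$ of them suffice to push the total above zero precisely when the smooth quintic bound fails. Controlling the combinatorics of $E_{3,m}\Omega$ as a $\mathrm{GL}_2$-representation, together with the non-constancy of the sign of the higher-order Euler characteristic (one needs an actual section, so the higher cohomology must also be estimated, typically via a holomorphic Morse inequality on the stack rather than bare RR), is where the technical weight of the argument lies; the hyperbolicity/Schwarz-lemma part is then essentially a transcription of known arguments to the orbifold setting. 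A secondary subtlety is making precise the class of ``classical orbifold entire curves'' for which the lift to $\mathcal{X}$ exists, so that the jet differential can legitimately be pulled back; this is the reason the statement restricts to \emph{classical} orbifold curves.
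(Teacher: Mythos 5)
Your overall strategy is the paper's: produce a global orbifold $3$-jet differential on the stack $\mathcal{X}$ (with $\bZ/2$-stabilizers over the $31$ nodes) by an orbifold Riemann--Roch estimate, then kill $\omega(f,f',f'',f''')$ for every classical orbifold entire curve by the fundamental vanishing theorem via the logarithmic derivative lemma. Both steps are correctly identified, and the restriction to \emph{classical} orbifold curves for exactly the reason you give (the lift to $\mathcal{X}$ is what allows the pullback of $E_{3,N}\Omega_{\mathcal{X}}$) matches the paper.

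Two points where your sketch leaves a real gap or points in a slightly wrong direction. First, the passage from $\chi>0$ to $h^0>0$: you defer this to ``typically a holomorphic Morse inequality on the stack,'' but the paper's tool is Bogomolov's vanishing theorem for $h^2$ of the graded pieces $S^{l_1}\Omega\otimes\cdots\otimes S^{l_k}\Omega$ of the Green--Griffiths filtration on a surface of general type, in its orbifold form (Proposition 2.3 of \cite{BoDe}); without some such vanishing the Euler characteristic alone proves nothing, so this step cannot be left as an ``expected obstacle.'' Second, the numerics are not a formality here and your guessed leading coefficient is off: for Green--Griffiths (not Demailly--Semple invariant) jets on a surface one gets $\chi(E_{2,N}^{GG})\sim(7c_1^2-5c_2)\frac{N^5}{4^3\cdot 3!}$ and $\chi(E_{3,N}^{GG})\sim(85c_1^2-49c_2)\frac{N^7}{6^5}$, the twisted sectors of the inertia stack contributing only lower-order terms by dimension; the nodes enter through the stacky Gauss--Bonnet value $c_2(\mathcal{X})=c_2(\tilde X)-\tfrac{3l}{2}$ rather than through leading twisted-sector corrections. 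For $d=5$, $l=31$ this gives $c_1^2(\mathcal{X})=5$, $c_2(\mathcal{X})=55-\tfrac{93}{2}=\tfrac{17}{2}$, hence $7c_1^2-5c_2=-\tfrac{15}{2}<0$ (order $2$ fails) while $85c_1^2-49c_2=\tfrac{17}{2}>0$ by a razor-thin margin, which is precisely why the statement is about order $3$. You should carry out this computation explicitly; the theorem lives or dies on it.
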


The paper is organized as follows. In section 2, we recall the basic facts on orbifold structures. In section 3, we describe orbifold symmetric differentials and orbifold morphisms. Then, in section 4, we recall Kawasaki-To\"en's Riemann-Roch formula on orbifolds. In section 5, we study the smooth case and in section 6, the singular case. In section 7, we give applications to complements of plane curves and nodal surfaces. Finally, in section 8, we give definitions and applications of orbifold jet differentials.

\textbf{Acknowledgements}. The approach using stacks has been suggested to us by Philippe Eyssidieux and Michael McQuillan, so we would like to thank them warmly for their interest in this work. We also thank Fr\'ed\'eric Campana for many interesting discussions.

\section{Orbifolds as pairs}
As in \cite{GK} (or \cite{DM}, \S14) we look at orbifolds as a particular type of log pairs. $(X,\Delta)$ is a log pair if $X$ is a normal algebraic variety (or a normal complex space) and $\Delta=\sum_id_iD_i$ is an effective $\bQ$-divisor where the $D_i$ are distinct, irreducible divisors and $d_i \in \bQ$. 

For orbifolds, we need to consider only pairs $(X,\Delta)$ such that $\Delta$ has the form
$$\Delta=\sum_i \left(1-\frac{1}{m_i}\right) D_i,$$
where the $D_i$ are prime divisors and $m_i \in \bN$. These pairs are called {\em geometric orbifolds} by Campana in \cite{C04} and \cite{C07}.

\begin{definition}
An {\em orbifold chart} on $X$ compatible with $\Delta$ is a Galois covering $\varphi : U \to \phi(U)\subset X$ such that
\begin{enumerate}
\item $U$ is a domain in $\bC^n$ and $\varphi(U)$ is open in $X$,
\item the branch locus of $\varphi$ is $\lceil \Delta \rceil \cap \varphi(U)$,
\item for any $x \in U'':=U\setminus \varphi^{-1}(X_{sing}\cup \Delta_{sing})$ such that $\varphi(x) \in D_i$, the ramification order of $\varphi$ at $x$ verifies $ord_{\varphi}(x)=m_i.$
\end{enumerate}
\end{definition}

\begin{definition}
An orbifold $\mathcal{X}$ is a log pair $(X,\Delta)$ such that $X$ is covered by orbifold charts compatible with $\Delta.$
\end{definition}

\begin{remark}
\begin{enumerate}

\item In the language of stacks, we have a smooth Deligne-Mumford stack $\pi: \mathcal{X} \rightarrow X$, with coarse moduli space $X$

\item Geometric orbifolds $(X,\Delta)$ of Campana \cite{C04} are more general since they are not supposed to be locally uniformizable. We have an injective mapping $\mathcal{X} \to (X,\Delta)$ but most pairs $(X,\Delta)$ are not in the image.

\item One can also take infinite $m_i$. The components with $m_i=\infty$ are added in the quasiprojective case to compactify $X$.
\end{enumerate}
\end{remark}

\begin{example}\label{nc}
Let $X$ be a complex manifold and $\Delta=\sum_{i} (1-\frac{1}{m_{i}})D_{i}$ with a support $\lceil \Delta \rceil$ which is a normal crossing divisor, i.e. for any point $x\in X$ there is a holomorphic coordinate system $(V,z_1,\dots,z_n)$ such that $\Delta$ has equation $$z_{1}^{(1-\frac{1}{m_{1}})}\dots z_{n}^{(1-\frac{1}{m_{n}})}=0.$$ Then $(X,\Delta)$ is an orbifold. Indeed, fix a coordinate system as above. Set
$$\varphi: U \to V, \text{   } \phi(x_1,\dots,x_n)=(x_1^{m_1},\dots,x_n^{m_n}).$$
Then $(U,\phi)$ is an orbifold chart on $X$ compatible with $\Delta$.

Equivalently, we have a smooth Deligne-Mumford stack $\pi: \mathcal{X} \rightarrow X$, with coarse moduli space $X$, described locally as follows. For every open polydisk $\bD \subset X$ with local coordinates $(z_{1},\dots,z_{n})$, such that $\Delta$ has equation $z_{1}^{(1-\frac{1}{m_{1}})}\dots z_{n}^{(1-\frac{1}{m_{n}})}=0$ we have: $$\mathcal{X}\times_{X} {\bD}= [{\bD'}/G],$$
where $G=\prod_{j=1}^{n} \mathbb{Z} / m_{j} \mathbb{Z}$ acts on the polydisk $\bD'$ by $(\zeta_{1},...,\zeta_{n}).(y_{1},\dots,y_{n})=(\zeta_{1}y_{1},...,\zeta_{n}y_{n})$ where we identify $ \mathbb{Z} / m_{j} \mathbb{Z}$ and the group of $m_{j}$-th root of unity.
\end{example}

\begin{remark}
The orbifolds of the previous example are said to be {\em smooth} (see \cite{C07}).
\end{remark}

More examples of orbifolds are obtained looking, in the case of surfaces, at different classes of singularities that naturally appear in the logarithmic Mori program (see for example \cite{Mat}).
\begin{definition}
Let $(X,\Delta)$, $\Delta=\sum_i\left(1-\frac{1}{m_i}\right)C_i$, be a pair where $X$ is a normal surface and $K_X+\Delta$ is $\mathbb{Q}$-Cartier. Let $\pi: \tilde{X} \to X$ be a resolution of the singularities of $(X,\Delta)$, so that the exceptional divisors, $E_i$ and the components of $\tilde{\Delta}$, the strict transform of $\Delta$, have normal crossings and
$$K_{\tilde{X}}+\tilde{\Delta}+\sum_iE_i=\pi^*(K_X+\Delta)+\sum_ia_iE_i.$$
\begin{enumerate}
\item We say that $(X,\Delta)$ is \emph{log canonical} if $a_i\geq 0$ for every exceptional curve $E_i$.
\item We say that $(X,\Delta)$ is \emph{klt} (Kawamata log terminal) if $m_i<\infty$ and $a_i > 0$ for every exceptional curve $E_i$.
\end{enumerate}
\end{definition}

The classification of log canonical singularities can be found in \cite{Ko}. The important point is that if $(X,\Delta,x)$ is a germ of a klt surface, then it is analytically equivalent to the quotient of $\bC^2$ by a finite subgroup $G\subset GL(2,\bC)$ and the $C_i$ correspond to the components of the branch locus of the quotient map $p=\bC^2\to \bC^2/G$. The ramification index over a component $C_i$ is equal to $m_i$. 

So, we obtain new examples of orbifold surfaces (\cite{Ulud}, \cite{Ko}, \cite{Meg}):

\begin{example}\label{lc}
Let $(X,\Delta)$, $\Delta=\sum_i\left(1-\frac{1}{m_i}\right)C_i$, be a log canonical pair with $X$ a surface, where all points which are not klt lie on $\lfloor \Delta \rfloor$, then  $(X,\Delta)$ is an orbifold.
\end{example}

\section{Orbifold morphisms and orbifold symmetric differentials}\label{stack}
\subsection{The smooth case}

Let $(X,\Delta)$ be a smooth orbifold, i.e $X$ is a smooth complex manifold and $\Delta=\sum_{i} (1-\frac{1}{m_{i}})Z_{i}$ has a support $\lceil \Delta \rceil$ which is a normal crossing divisor.

Complex hyperbolic aspects of one-dimensional orbifolds have been studied in \cite{CW} and, in \cite{Rou08}, we have started the investigation of the higher dimensional case.

We want to study orbifold holomorphic maps $f:\bC \to (X,\Delta)$. They are defined following \cite{C07} as
\begin{definition} 
Let $(X,\Delta)$ be a smooth orbifold with $\Delta=\sum_{i} (1-\frac{1}{m_{i}})Z_{i}$, $\bD=\{z \in \bC / |z|<1\}$ the unit disk and $h$ a holomorphic map from $\bD$ to $X$.
\begin{enumerate}
\item $h$ is a (non-classical) orbifold morphism from $\bD$ to $(X,\Delta)$ if $h(\bD) \nsubseteq supp(\Delta)$ and $mult_{x}(h^*Z_{i})\geqslant m_{i}$ for all $i$ and $x\in \bD$ with $h(x)\in supp(Z_{i})$. If $m_{i}=\infty$ we require $h(\bD)\cap Z_{i} = \emptyset$.

\item $h$ is a classical orbifold morphism from $\bD$ to $(X,\Delta)$ if $h(\bD) \nsubseteq supp(\Delta)$ and $mult_{x}(h^*Z_{i})$ is a multiple of $m_{i}$ for all $i$ and $x\in \bD$ with $h(x)\in supp(Z_{i})$. If $m_{i}=\infty$ we require $h(\bD)\cap Z_{i} = \emptyset$.

\end{enumerate}
\end{definition}

In the compact or logarithmic setting, symmetric differentials turned out to be key objects for such a study (see for example \cite{Bo77}, \cite{McQ0}).
Let $(x_{1},\dots,x_{n})$ be local coordinates such that $\Delta$ has equation $$x_{1}^{(1-\frac{1}{m_{1}})}\dots x_{n}^{(1-\frac{1}{m_{n}})}=0.$$
Let us recall the definition of sheaves of differential forms on orbifolds (see \cite{C07} for details).
\begin{definition}
For $N$ a positive integer, $S^{N}\Omega_{(X,\Delta)}$ is the locally free subsheaf of $S^{N}\Omega_{X}(log\lceil \Delta \rceil)$ generated by the elements $$x_{1}^{\lceil{\frac{\alpha_{1}}{m_{1}}}\rceil}\dots x_{n}^{\lceil{\frac{\alpha_{n}}{m_{n}}}\rceil}\left(\frac{dx_{1}}{x_{1}}\right)^{\alpha_{1}}\dots\left(\frac{dx_{n}}{x_{n}}\right)^{\alpha_{n}},$$
such that $\sum \alpha_{i}=N$, where $\lceil k \rceil$ denotes the round up of $k$.
\end{definition}

\begin{remark}
One motivation for this definition is that these orbifold symmetric differentials act on orbifold morphisms, i.e. for every orbifold morphism $h: \bD \to (X,\Delta)$, $h^*S^{N}\Omega_{(X,\Delta)}\subset S^N\Omega_\bD$. Moreover, this property characterizes orbifold morphisms \cite{C07}.
\end{remark}

Now we consider the smooth Deligne-Mumford stack $\pi: \mathcal{X} \rightarrow X$, with coarse moduli space $X$, as in example \ref{nc} above, and remark that
\begin{proposition}\label{symstack}
$$\pi_{*}S^{N}\Omega_{\mathcal{X}}=S^{N}\Omega_{(X,\Delta)}.$$
\end{proposition}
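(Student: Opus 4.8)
The plan is to verify the equality of sheaves locally on $X$, since $\pi_*S^N\Omega_{\mathcal X}$ and $S^N\Omega_{(X,\Delta)}$ are both coherent and the statement is local on the coarse moduli space. Fix a polydisk $\bD\subset X$ with coordinates $(z_1,\dots,z_n)$ in which $\Delta$ has equation $z_1^{(1-1/m_1)}\cdots z_n^{(1-1/m_n)}=0$, so that by Example \ref{nc} we have $\mathcal X\times_X\bD=[\bD'/G]$ with $G=\prod_j\mathbb Z/m_j\mathbb Z$ acting diagonally on $\bD'$ by $(\zeta_1,\dots,\zeta_n)\cdot(y_1,\dots,y_n)=(\zeta_1y_1,\dots,\zeta_ny_n)$, and the covering map $q:\bD'\to\bD$ is $z_j=y_j^{m_j}$. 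On the quotient stack, sections of $S^N\Omega_{\mathcal X}$ over $\bD$ are by definition the $G$-invariant sections of $S^N\Omega_{\bD'}$ over $\bD'$; thus $\pi_*S^N\Omega_{\mathcal X}(\bD)=\bigl(S^N\Omega_{\bD'}(\bD')\bigr)^G$, and the content of the proposition is to identify this invariant module with the module of sections of $S^N\Omega_{(X,\Delta)}$ described by the generators in the definition above.

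First I would pull back the coordinate differentials: from $z_j=y_j^{m_j}$ one gets $dz_j=m_j y_j^{m_j-1}\,dy_j$, equivalently $\tfrac{dz_j}{z_j}=m_j\tfrac{dy_j}{y_j}$. Hence a monomial symmetric differential $z_1^{\beta_1}\cdots z_n^{\beta_n}\bigl(\tfrac{dz_1}{z_1}\bigr)^{\alpha_1}\cdots\bigl(\tfrac{dz_n}{z_n}\bigr)^{\alpha_n}$ with $\sum\alpha_i=N$ pulls back, up to the nonzero constant $\prod_j m_j^{\alpha_j}$, to $\prod_j y_j^{m_j\beta_j}\bigl(\tfrac{dy_j}{y_j}\bigr)^{\alpha_j}=\prod_j y_j^{m_j\beta_j-\alpha_j}(dy_j)^{\alpha_j}$. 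This lies in $S^N\Omega_{\bD'}$ (i.e. is holomorphic, with no poles along $y_j=0$) precisely when $m_j\beta_j-\alpha_j\ge 0$ for all $j$, i.e. $\beta_j\ge\alpha_j/m_j$, i.e. $\beta_j\ge\lceil\alpha_j/m_j\rceil$ since $\beta_j\in\mathbb Z$. Moreover such a pulled-back monomial is automatically $G$-invariant: the element $(\zeta_1,\dots,\zeta_n)$ multiplies $y_j^{m_j\beta_j-\alpha_j}(dy_j)^{\alpha_j}$ by $\zeta_j^{m_j\beta_j-\alpha_j}\cdot\zeta_j^{\alpha_j}=\zeta_j^{m_j\beta_j}=1$ as $\zeta_j$ is an $m_j$-th root of unity. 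This shows $q^*S^N\Omega_{(X,\Delta)}\subset\bigl(S^N\Omega_{\bD'}\bigr)^G$, hence $S^N\Omega_{(X,\Delta)}\subset\pi_*S^N\Omega_{\mathcal X}$ locally; taking $\beta_j=\lceil\alpha_j/m_j\rceil$ recovers exactly the asserted generators.

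For the reverse inclusion I would argue that every $G$-invariant holomorphic symmetric differential on $\bD'$ is an $\cO_\bD$-linear combination of the monomials above. Expand an arbitrary section $\omega\in S^N\Omega_{\bD'}(\bD')$ in the basis $\{(dy_1)^{\alpha_1}\cdots(dy_n)^{\alpha_n}:\sum\alpha_i=N\}$ with holomorphic coefficients $g_\alpha(y)$. Under $(\zeta_1,\dots,\zeta_n)\in G$ the $\alpha$-component transforms with the character $\prod_j\zeta_j^{\alpha_j}$ on the differential part, so $G$-invariance of $\omega$ forces, component by component, $g_\alpha(\zeta_1y_1,\dots,\zeta_ny_n)=\prod_j\zeta_j^{-\alpha_j}g_\alpha(y)$ for all $(\zeta_j)$; comparing Taylor coefficients, $g_\alpha$ is a convergent series in monomials $y_1^{b_1}\cdots y_n^{b_n}$ with $b_j\equiv-\alpha_j\pmod{m_j}$, i.e. $b_j=m_j\beta_j-\alpha_j$ with $\beta_j\ge\lceil\alpha_j/m_j\rceil$ because $b_j\ge 0$ by holomorphy. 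Each such term is the pullback under $q$ of $z_1^{\beta_1}\cdots z_n^{\beta_n}\cdot(\text{constant})$ times the corresponding $z$-differential monomial, so $\omega$ descends to a section of $S^N\Omega_{(X,\Delta)}$, as claimed. Regrouping, $\pi_*S^N\Omega_{\mathcal X}(\bD)\subset S^N\Omega_{(X,\Delta)}(\bD)$, completing the local identification; since both sheaves are subsheaves of $S^N\Omega_X(\log\lceil\Delta\rceil)$ and the identifications are compatible on overlaps, they glue to the global equality.

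The routine points — holomorphy of coefficients, convergence of the invariant expansion — are standard; the only place demanding genuine care is checking that the descent is compatible with the transition functions between orbifold charts, so that the local isomorphisms patch to an isomorphism of sheaves on $X$ rather than merely an abstract isomorphism chart by chart. This amounts to observing that both $\pi_*S^N\Omega_{\mathcal X}$ and $S^N\Omega_{(X,\Delta)}$ are defined intrinsically as subsheaves of $S^N\Omega_X(\log\lceil\Delta\rceil)$ — the former because $\pi$ is an isomorphism over $X\setminus\lceil\Delta\rceil$ and $\pi_*S^N\Omega_{\mathcal X}$ is the subsheaf of $S^N\Omega_X(\log\lceil\Delta\rceil)$ of sections whose pullback to the charts is regular — so the equality of their local sections inside this common ambient sheaf immediately globalizes.
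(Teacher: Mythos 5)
Your proof is correct and follows essentially the same route as the paper's: reduce to the local model $[\bD'/G]\to\bD$ with $z_j=y_j^{m_j}$, identify $\pi_*S^N\Omega_{\mathcal X}$ with the $G$-invariant symmetric differentials on $\bD'$, and match the invariance condition on monomials against the generators $z^{\lceil\alpha/m\rceil}(dz/z)^{\alpha}$. You simply spell out the monomial bookkeeping and the globalization step that the paper's one-line verification leaves implicit.
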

\begin{proof}
Note first that $\pi^{*}S^{N}\Omega_{(X,\Delta)}\subset S^{N}\Omega_{\mathcal{X}}$. Then the isomorphism between $\pi_{*}S^{N}\Omega_{\mathcal{X}}$ and $S^{N}\Omega_{(X,\Delta)}$ can be verified locally. Take 
$$y_{1}^{\alpha_{1}}\dots y_{n}^{\alpha_{1}}(dy_{1})^{\beta_{1}}\dots (dy_{n})^{\beta_{n}},$$
where $\alpha_{i} \geq 0$ for all $i$.
Then the assertion is equivalent to the fact that the preceding form is invariant under $G=\prod_{j=1}^{n} \mathbb{Z} / m_{j} \mathbb{Z}$ if and only if $m_{i}|\alpha_{i}+\beta_{i}$ and $\frac{\alpha_{i}+\beta_{i}}{m_{i}}\geq \lceil{\frac{\beta_{i}}{m_{i}}}\rceil$. This follows immediately from the definition of the action.
\end{proof}

Moreover $R^q\pi_{*}S^{N}\Omega_{\mathcal{X}}=0$ for $q>0$ (see \cite{MO}).
Therefore $$\chi(\mathcal{X},S^{N}\Omega_{\mathcal{X}})=\chi(X,S^{N}\Omega_{(X,\Delta)}).$$

Towards the existence of global sections of $S^{N}\Omega_{(X,\Delta)}$, we will compute $\chi(\mathcal{X},S^{N}\Omega_{\mathcal{X}})$ using Kawasaki-To\"en's Riemann-Roch formula (\cite{Ka}, \cite{To}) in the case of orbifold surfaces.

\subsection{The general case}
Following the philosophy of the previous section, we can extend the above definitions to any orbifold $(X,\Delta)$. We denote $\pi: \mathcal{X} \to X$ the Deligne-Mumford stack associated to $(X,\Delta)$.

\begin{definition}Let $(X,\Delta)$ be an orbifold. For $N$ a positive integers, the sheaf $S^{N}\Omega_{(X,\Delta)}$ of orbifold symmetric differentials is defined to be
$$S^{N}\Omega_{(X,\Delta)}:=\pi_{*}S^{N}\Omega_{\mathcal{X}}.$$
\end{definition}

\begin{definition}
Let $(X,\Delta)$ be an orbifold.
\begin{enumerate}
\item A  holomorphic map $f:\bD \to (X,\Delta)$ is a classical orbifold map if it admits a lift $\tilde{f}:\bD \to \mathcal{X}$ to the Deligne-Mumford stack associated to $(X,\Delta)$.
\item A  holomorphic map $f:\bD \to (X,\Delta)$ is a (non-classical) orbifold map if $f^*(S^{N}\Omega_{(X,\Delta)})\subset S^N\Omega_{\bD}$ for all positive integers $N$.
\end{enumerate}
\end{definition}

\section{Kawasaki-To\"en's Riemann-Roch formula}
We want to prove the existence of global orbifold symmetric differentials. For this, we shall apply Riemann-Roch on orbifolds. This was done in \cite{Ka} and generalized in \cite{To} to more general Deligne-Mumford stacks. We shall follow the latter approach.

Let us recall briefly To\"en's Riemann-Roch formula on Deligne-Mumford stacks following \cite{To} and \cite{Ts}. There is an \'etale cohomology theory on stacks which enables to define Chern classes and Todd classes (see \cite{To}). For this theory, if $p:\mathcal{X} \rightarrow X$ is the projection from a stack to its moduli space, there is an isomorphism $p_{*}:A(\mathcal{X}) \simeq A(X)$. The key point of To\"en's formula is that the correct cohomology to work with is that of the inertia stack, defined below. The components of the inertia stack give correction terms to the standard Riemann-Roch formula.

\begin{definition}
Let $\mathcal{X}$ be a Deligne-Mumford stack. The inertia stack $I\mathcal{X}$ associated to $\mathcal{X}$ is defined to be the fiber product
$$I\mathcal{X}:=\mathcal{X}\times_{\mathcal{X}\times\mathcal{X}}\mathcal{X}.$$
\end{definition}

Locally one may describe $I\mathcal{X}$ as follows. If $X$ is a variety, $H$ a finite group acting   on $X$ and $F=[X/H]$ the quotient stack then
$$I_{F}\simeq \coprod _{h\in c(H)} [X^h/Z_{h}],$$
where $X^h \subset X$ is the fixed locus by $h$, $Z_{h}$ the centralizer of $h$ in $H$ and $c(H)$ the conjugacy classes of $H$.

There is a natural projection $q: I\mathcal{X} \rightarrow \mathcal{X}$. We write
$$I\mathcal{X}= \coprod _{i\in I} \mathcal{X}_i$$
for the decomposition of $I\mathcal{X}$ into a disjoint union of connected components. There is a distinguished component $\mathcal{X}_0$, corresponding to $h=1$, which is isomorphic to $\mathcal{X}.$

If $F$ is a vector bundle, then $q^*F$ decomposes into a direct sum of eigen-subbundles
$$\bigoplus_{\zeta \in \mu_{
\infty}} F^{(\zeta)},$$
where $\mu_{\infty}$ is the group of roots of unity. For such a decomposition one defines a map $\rho : K^0(I\mathcal{X}) \rightarrow K^0(I\mathcal{X})$ by
$$\rho(\bigoplus_{\zeta \in \mu_{\infty}} F^{(\zeta)}):=\sum_{\zeta} \zeta F^{(\zeta)}.$$

Then one defines
\begin{definition}\cite{To}
Define $\widetilde{ch}: K^0(\mathcal{X}) \rightarrow H^*(I\mathcal{X})$ to be the composite
$$K^0(\mathcal{X}) \xrightarrow{q^*F} K^0(I\mathcal{X}) \xrightarrow{\rho}K^0(I\mathcal{X}) \xrightarrow{ch} H^*(I\mathcal{X}),$$
where $ch$ is the usual Chern character.
\end{definition}

\begin{definition}\cite{To}
Let $E$ be a vector bundle on $\mathcal{X}$ and $q^*E$ decomposed into a direct sum $(q^*E)^{inv}\oplus (q^*E)^{mov}$ where $(q^*E)^{inv}$ is the eigenbundle with eigenvalue $1$ and $(q^*E)^{mov}$ is the direct sum with eigenvalues not equal to $1$. Then define
$\widetilde{Td}: K^0(\mathcal{X}) \rightarrow H^*(I\mathcal{X})$ by
$$\widetilde{Td}(E):=\frac{Td((q^*E)^{inv}}{ch(\rho \circ \lambda_{-1}(((q^*E)^{mov})^{*}))},$$
where $\lambda_{-1}$ is defined by $\lambda_{-1}(V):=\sum_{a\geq 0}(-1)^a \bigwedge^aV$ for a vector bundle $V$.
\end{definition}

Then To\"en's Riemann-Roch formula gives
\begin{theorem}\label{RR}\cite{To}
Let $\mathcal{X}$ be a Deligne-Mumford stack with quasi-projective coarse moduli space and which has the resolution property (i.e every coherent sheaf is a quotient of a vector bundle). Let $E$ be a coherent sheaf on $\mathcal{X}$ then
$$\chi(\mathcal{X},E)=\int_{\mathcal{X}}\widetilde{ch}(E)\widetilde{Td}(T_{\mathcal{X}}).$$
\end{theorem}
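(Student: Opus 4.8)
The statement is the special case of To\"en's Grothendieck--Riemann--Roch theorem applied to the structure morphism $\mathcal{X}\to\mathrm{Spec}\,\bC$, and the plan is to prove it by reducing to the equivariant situation and then invoking the holomorphic Lefschetz fixed point formula. First a d\'evissage: by the resolution property every coherent sheaf $E$ admits a finite resolution by vector bundles, and both sides of the identity are additive in short exact sequences (the left via the long exact cohomology sequence, the right because $\widetilde{ch}$ and $\widetilde{Td}$ factor through $K^0(\mathcal{X})$ and are additive there), so one reduces to the case where $E$ is a vector bundle. Next, a Deligne--Mumford stack is, \'etale-locally on its coarse space, a quotient stack $[Y/G]$ with $G$ a finite group acting on a quasi-projective scheme $Y$ --- and for the orbifolds of this paper such presentations are explicit and global on suitable charts, cf.\ Examples~\ref{nc} and \ref{lc}. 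Since both sides are \'etale-local in nature, a Mayer--Vietoris (\v{C}ech) argument on $I\mathcal{X}$ reduces us to the case $\mathcal{X}=[Y/G]$ with $Y$ smooth and proper.

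For $\mathcal{X}=[Y/G]$ one has $K^0(\mathcal{X})=K^0_G(Y)$ and $\chi(\mathcal{X},E)=\sum_i(-1)^i\dim H^i(Y,E)^G$, which by averaging over the group equals $\frac{1}{|G|}\sum_{g\in G}\mathrm{tr}\bigl(g\mid\sum_i(-1)^iH^i(Y,E)\bigr)$. Each summand is evaluated by the Atiyah--Bott / Atiyah--Singer holomorphic Lefschetz fixed point formula,
$$\mathrm{tr}\bigl(g\mid\sum_i(-1)^iH^i(Y,E)\bigr)=\int_{Y^g}\frac{ch_g(E|_{Y^g})\,Td(TY^g)}{ch_g\bigl(\lambda_{-1}(N^{*}_{Y^g/Y})\bigr)},$$
where $ch_g$ is the $g$-twisted Chern character: one decomposes a $g$-equivariant bundle $F$ on $Y^g$ into eigenbundles $F=\bigoplus_\zeta F^{(\zeta)}$ of the finite-order automorphism $g$ and sets $ch_g(F):=\sum_\zeta\zeta\,ch(F^{(\zeta)})$. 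Now on the component $[Y^g/Z_g]$ of the inertia stack $I\mathcal{X}=\coprod_{[g]\in c(G)}[Y^g/Z_g]$ the universal automorphism is $g$, so $ch_g$ is precisely the operation $ch\circ\rho\circ q^{*}$ of the definitions above; in particular $\widetilde{ch}(E)$ restricts to $ch_g(E|_{Y^g})$ on this component, and $\widetilde{Td}(T_{\mathcal{X}})$ restricts to $Td(TY^g)/ch_g\bigl(\lambda_{-1}(N^{*}_{Y^g/Y})\bigr)$, because the invariant part of $q^{*}T_{\mathcal{X}}|_{Y^g}$ is the tangent bundle $TY^g$ and the moving part is the normal bundle $N_{Y^g/Y}$.

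It remains to reassemble the pieces. The summands for conjugate elements coincide, so grouping $\frac{1}{|G|}\sum_{g\in G}$ by conjugacy classes gives $\sum_{[g]\in c(G)}\frac{1}{|Z_g|}\int_{Y^g}(\cdots)$; and since the integral over the quotient stack $[Y^g/Z_g]$ of a $Z_g$-invariant class equals $\frac{1}{|Z_g|}$ times its integral over $Y^g$, the total is exactly $\int_{I\mathcal{X}}\widetilde{ch}(E)\,\widetilde{Td}(T_{\mathcal{X}})$. The distinguished component $\mathcal{X}_0\cong\mathcal{X}$ (the class of $g=1$) contributes the naive Riemann--Roch term $\int_{\mathcal{X}}ch(E)\,Td(T_{\mathcal{X}})$, and the remaining components of $I\mathcal{X}$ contribute the stacky correction terms. (Everything is taken with $\bQ$- or $\bC$-coefficients so that the denominators $ch_g(\lambda_{-1}(N^{*}))$ --- invertible because the normal eigenvalues $\zeta$ are $\neq 1$ --- and the rational factors $1/|Z_g|$ make sense; under the isomorphism $p_{*}:A(\mathcal{X})\simeq A(X)$ the symbol $\int_{\mathcal{X}}$ is the honest degree on the coarse space.)

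The main obstacle is precisely the step glossed over as "\'etale-local, then patch": one must show that $\widetilde{ch}$ and $\widetilde{Td}$ are intrinsically defined on $I\mathcal{X}$, independently of the local quotient presentation, and that the local holomorphic Lefschetz contributions glue into a single integral over $I\mathcal{X}$. The structurally clean way to do this is to establish not just the numerical identity but a covariant Riemann--Roch transformation $\tau_{\mathcal{X}}\colon K^0(\mathcal{X})\to H^{*}(I\mathcal{X})$, functorial for proper morphisms of Deligne--Mumford stacks: one factors a proper morphism through a closed immersion followed by a projective-bundle projection, treating the projection by explicit computation and the closed immersion by deformation to the normal cone --- now in the stacky setting, where the delicate point is to track the eigenbundle decomposition under the universal automorphism along the deformation and to verify compatibility with \'etale base change. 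The resolution property is what supplies enough vector bundles to run all these reductions, while quasi-projectivity of the coarse moduli space guarantees the isomorphism $p_{*}\colon A(\mathcal{X})\simeq A(X)$ and hence that $\int_{\mathcal{X}}$ is well defined. All of this is carried out in \cite{To}, extending \cite{Ka} beyond the case of orbifolds.
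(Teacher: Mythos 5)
The paper offers no proof of this statement: it is quoted verbatim from To\"en \cite{To} (building on Kawasaki \cite{Ka}), so there is no internal argument to compare yours against. Judged on its own terms, your sketch correctly identifies the two standard strategies and how they fit together: d\'evissage to vector bundles via the resolution property, evaluation of local contributions by the equivariant holomorphic Lefschetz formula, and reassembly over conjugacy classes into an integral over the inertia stack, with the identification of $\widetilde{ch}$ and $\widetilde{Td}$ on each component $[Y^g/Z_g]$ carried out correctly. Your closing paragraph also rightly locates the real content of \cite{To} in the functorial Riemann--Roch transformation rather than in the fixed-point formula.

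The one step that, as literally written, does not work is the reduction ``both sides are \'etale-local in nature, so a Mayer--Vietoris argument on $I\mathcal{X}$ reduces us to $\mathcal{X}=[Y/G]$ with $Y$ smooth and proper.'' The left-hand side $\chi(\mathcal{X},E)$ is a global coherent Euler characteristic and is not computed by any \v{C}ech patching of local quotient presentations; moreover a general Deligne--Mumford stack satisfying the hypotheses is a global quotient $[Y/GL_n]$ by a positive-dimensional group, not by a finite group, so the holomorphic Lefschetz formula for a finite-order automorphism of a compact manifold is simply not available globally. This is precisely why To\"en's proof is organized around a covariant transformation $\tau_{\mathcal{X}}\colon K_0(\mathcal{X})\to H^{*}(I\mathcal{X})$ compatible with proper pushforward and \'etale base change (closed immersions by deformation to the normal cone, projections by direct computation), with the Lefschetz-type local formula appearing only as a consequence for quotients by finite groups. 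You acknowledge this in your final paragraph, but the body of the argument should be restructured so that the functoriality statement is the engine and the $[Y/G]$ computation is a verification, not the reduction target. Note also that the statement as reproduced in the paper omits a properness hypothesis that is needed for $\chi(\mathcal{X},E)$ to be finite; your sketch silently (and correctly) adds it.
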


\section{Applications to smooth orbifold surfaces of general type}
Let us apply To\"en's formula in our situation.  Our observation here is that we are only interested in asymptotic Riemann-Roch, therefore the only contribution that we have to take into account is the one coming from the component of the inertia stack of maximal dimension i.e the stack itself. In other words the \'etale cohomology is enough to deal with asymptotic Riemann-Roch.

\begin{theorem}\label{Bogorb}
Let  $(X,\Delta)$ be a smooth projective orbifold surface, $\Delta=\sum_{i} (1-\frac{1}{m_{i}})C_{i}$. Then
$$\chi(\mathcal{X},S^{N}\Omega_{\mathcal{X}})=\frac{N^3}{6}(c_{1}^2-c_{2})+O(N^2),$$
 where $\mathcal{X}$is the stack associated to $X$ described in example \ref{nc}, $c_{1}$ and $c_{2}$ are the  \'etale orbifold Chern classes of $\mathcal{X}$.
 \end{theorem}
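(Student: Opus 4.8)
The plan is to apply To\"en's Riemann-Roch formula (Theorem \ref{RR}) to the coherent sheaf $E=S^{N}\Omega_{\mathcal{X}}$ on the Deligne-Mumford stack $\mathcal{X}$ of example \ref{nc}, and to extract only the leading term in $N$. First I would observe that $\mathcal{X}$ is a smooth projective orbifold surface with quasi-projective (indeed projective) coarse moduli space, and that it has the resolution property, so the hypotheses of Theorem \ref{RR} are satisfied. Since $\dim \mathcal{X}=2$, all the cohomology classes live in $H^{\le 4}(I\mathcal{X})$, and the integral $\int_{\mathcal{X}}\widetilde{ch}(E)\widetilde{Td}(T_{\mathcal{X}})$ only picks out the degree-$4$ (i.e. dimension-$0$) part.

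The key point, emphasized in the paragraph preceding the statement, is that we only want \emph{asymptotic} Riemann-Roch. The inertia stack decomposes as $I\mathcal{X}=\coprod_{i\in I}\mathcal{X}_i$ with the distinguished component $\mathcal{X}_0\simeq\mathcal{X}$ of dimension $2$, while every other component $\mathcal{X}_i$, $i\ne 0$, corresponds to a fixed locus of a nontrivial group element and hence has dimension $\le 1$. Now $S^{N}\Omega_{\mathcal{X}}$ has rank $\binom{N+1}{1}=N+1$, which grows linearly in $N$, and its Chern character $\widetilde{ch}(S^{N}\Omega_{\mathcal{X}})$ on the component $\mathcal{X}_0$ has its degree-$0$ part equal to this rank $\sim N$, degree-$2$ part $\sim N^2$, and degree-$4$ part $\sim N^3$, by the standard computation of $ch(S^N V)$ for a rank-$2$ bundle $V$ (the Chern roots of $S^N V$ are $a\lambda_1+(N-a)\lambda_2$, $0\le a\le N$). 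On the lower-dimensional components $\mathcal{X}_i$, $i\ne 0$, the contribution to the degree-$4$ integrand is at most of the top-degree available there, so it contributes at most $O(N^2)$ after multiplying by $\widetilde{Td}$ and integrating. Hence the only term of order $N^3$ comes from the $\mathcal{X}_0\simeq\mathcal{X}$ component, on which $\widetilde{ch}$ restricts to the ordinary Chern character and $\widetilde{Td}(T_{\mathcal{X}})$ restricts to the ordinary Todd class $Td(T_{\mathcal{X}})$.

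So on $\mathcal{X}_0$ the leading term reduces to the classical Bogomolov computation: writing $c_1=c_1(T_{\mathcal{X}})$, $c_2=c_2(T_{\mathcal{X}})$ for the \'etale orbifold Chern classes, one has $\int_{\mathcal{X}} ch(S^N\Omega_{\mathcal{X}})\,Td(T_{\mathcal{X}})$, and the degree-$4$ part of $ch(S^N\Omega_{\mathcal{X}})$ is $\frac{N^3}{6}(c_1^2-c_2)+O(N^2)$ (this is exactly the surface case of the asymptotic of $S^N$ of the cotangent bundle, which is the engine of Bogomolov's inequality; the sign is fixed by $\Omega_{\mathcal{X}}$ having Chern roots $-\lambda_1,-\lambda_2$ with $\lambda_1+\lambda_2=c_1$, $\lambda_1\lambda_2=c_2$). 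Multiplying by $Td(T_{\mathcal{X}})=1+\tfrac{c_1}{2}+\cdots$ only affects lower-order-in-$N$ terms in the degree-$4$ output, so $\chi(\mathcal{X},S^N\Omega_{\mathcal{X}})=\frac{N^3}{6}(c_1^2-c_2)+O(N^2)$, as claimed; combined with Proposition \ref{symstack} and the vanishing of higher direct images this also equals $\chi(X,S^N\Omega_{(X,\Delta)})$.

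The main obstacle is not any single hard computation but rather making the ``asymptotic'' bookkeeping rigorous: one must check carefully that all the exotic ingredients of To\"en's formula ($\rho$, the eigenbundle decomposition of $q^*T_{\mathcal{X}}$, $\widetilde{Td}$ with its denominator $ch(\rho\circ\lambda_{-1}((\cdots)^{mov})^*)$) contribute, over the non-distinguished components $\mathcal{X}_i$, only classes whose product with $\widetilde{ch}(S^N\Omega_{\mathcal{X}})|_{\mathcal{X}_i}$ integrates to $O(N^2)$ — which follows from the dimension bound $\dim\mathcal{X}_i\le 1$ together with the fact that the $N$-dependence of $\widetilde{ch}(S^N\Omega_{\mathcal{X}})$ in each fixed cohomological degree $2k$ is polynomial of degree exactly $k+1$ in $N$. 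Once that is in place, the degree-$3$-in-$N$ coefficient is forced to come from $\mathcal{X}_0$ and the formula follows from the classical rank-$2$ symmetric-power Chern character identity.
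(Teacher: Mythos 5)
Your proposal is correct, and it rests on the same key observation as the paper (apply To\"en's Riemann--Roch and note that only the top-dimensional, untwisted component of the inertia stack can contribute to the $N^3$ term), but the implementation is genuinely different. The paper first passes to the projectivized bundle $Y=\bP(\Omega_{\mathcal{X}})$ with $L=\mathcal{O}_Y(1)$, uses $\chi(\mathcal{X},S^N\Omega_{\mathcal{X}})=\chi(Y,L^{\otimes N})$, decomposes the inertia stack $IY$ (whose twisted sectors lie over the $C_i$ and the points $C_i\cap C_j$, hence have dimension $\le 2 < \dim Y=3$), and reads off the leading term as $\frac{c_1(L)^3}{6}N^3$, finishing with the identity $\pi_*c_1(L)^3=c_1^2-c_2$; this avoids any direct computation with symmetric powers, since on $Y$ one only needs asymptotic Riemann--Roch for a line bundle. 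You instead decompose $I\mathcal{X}$ itself and compute the degree-$4$ part of $ch(S^N\Omega_{\mathcal{X}})$ via the Chern roots $ka_1+(N-k)a_2$ of $S^N$ of a rank-$2$ bundle, which indeed gives $\frac{N^3}{6}(c_1^2-c_2)+O(N^2)$, with the Todd factor and the twisted sectors absorbed into the error term by the dimension bound. Both routes are sound; the paper's buys a cleaner leading-term extraction (a single line bundle), yours avoids introducing the auxiliary stack $Y$ and its inertia stack. One cosmetic point: on the twisted sectors $\widetilde{ch}(S^N\Omega_{\mathcal{X}})$ is not literally polynomial in $N$ in each cohomological degree because of the $\zeta^N$ coefficients coming from $\rho$; since these are roots of unity the bound $O(N^{k+1})$ in degree $2k$ still holds, which is all you use, but the phrasing ``polynomial of degree exactly $k+1$'' should be weakened to this bound.
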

 
 \begin{proof}
 Let $Y:=\bP(\Omega_{\mathcal{X}})$ and $L:=\mathcal{O}_{Y}(1)$, then $\chi(\mathcal{X},S^{N}\Omega_{\mathcal{X}})=\chi(Y,L^{\otimes N}).$ The inertia stack $p: IY\rightarrow Y$ is decomposed in connected components $$IY=Y\coprod _{i=1}^{n} Y_i \coprod _{1\leq i<j \leq n} \coprod _{k=1}^{C_{i}C_{j}} Y_{i,j,k},$$
 where $Y_i$ lies over $C_{i}$ and  $Y_{i,j,k}$ over $C_{i}\cap C_{j}$.
 Corresponding to this decomposition we have
 $$\widetilde{ch}(L^{\otimes N})=ch(L^{\otimes N})\bigoplus_{i} \zeta_{i}^{N} ch(L_{i}^{\otimes N}) \bigoplus_{i,j,k} \zeta_{i,j,k}^{N} ch(L_{i,j,k}^{\otimes N}),$$
 where $L_{i}$ and $L_{i,j,k}$ denotes the restrictions of $p^*L$ to $Y_i$ and $Y_{i,j,k}.$
 We apply To\"en's Riemann-Roch formula and obtain
 $$\chi(Y,L^{\otimes N})=\frac{c_{1}(L)^3}{6}N^3+O(N^2),$$
 since the terms coming from the $L_{i}$ and $L_{i,j,k}$ are all $O(N^2)$ because of the dimension.
 This concludes the proof by the classical formula relating $c_{1}(L)$, $c_{1}$ and $c_{2}$.
 \end{proof}
 
 Now we compute the \'etale orbifold Chern classes.
 The following "Gauss-Bonnet" formula will be useful
 \begin{proposition}\label{GB}\cite{To}
 Let $\mathcal{X}$ be a Deligne-Mumford stack of dimension $n$ with the same hypotheses as in theorem \ref{RR}. Let $\{M_{i}\}_{i}$ be a stratification of its coarse moduli space such that the order of ramification of $\mathcal{X}$ is constant on each $M_{i}$ equal to $m_{i}$. Then
 $$\int_{\mathcal{X}}c_{n}=\sum_{i}\frac{\chi(M_{i})}{m_{i}}.$$
 \end{proposition}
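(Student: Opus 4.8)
The plan is to derive this ``Gauss--Bonnet'' identity from To\"en's Riemann--Roch formula (Theorem \ref{RR}) applied to the structure sheaf $E=\cO_{\cX}$, exactly as the ordinary Gauss--Bonnet theorem follows from classical Hirzebruch--Riemann--Roch applied to the trivial bundle. First I would note that $\chi(\cX,\cO_{\cX})=\chi(X,\cO_X)$ since $\pi_*\cO_{\cX}=\cO_X$ and the higher direct images vanish (the fibers are quotients of points by finite groups), and that $\cX$ has the resolution property because it is smooth Deligne--Mumford with quasi-projective coarse moduli space; hence Theorem \ref{RR} gives $\chi(X,\cO_X)=\int_{\cX}\widetilde{ch}(\cO_{\cX})\,\widetilde{Td}(T_{\cX})$. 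For the trivial bundle $q^*\cO_{\cX}=\cO_{I\cX}$ is entirely in the invariant part, so $\widetilde{ch}(\cO_{\cX})=1$ on every component of the inertia stack, and the formula reduces to $\chi(X,\cO_X)=\int_{I\cX}\widetilde{Td}(T_{\cX})=\sum_{i\in I}\int_{\cX_i}\widetilde{Td}(T_{\cX})|_{\cX_i}$.

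Next I would extract the top-degree (degree $n$) part of each contribution. On the distinguished component $\cX_0\simeq\cX$ the tangent bundle is entirely invariant, so $\widetilde{Td}(T_{\cX})|_{\cX_0}=Td(T_{\cX})$ and its degree-$n$ part integrates to $\int_{\cX}c_n$ by the usual argument (the degree-$n$ component of the Todd class is $\tfrac{1}{(n)!}c_n$ up to the standard normalization, matching the classical statement $\chi(\cO)=\int Td$). For a non-distinguished component $\cX_h$ lying over a stratum where the group element $h$ acts with eigenvalues $\zeta_j\neq1$ on the normal directions and $1$ on the tangent directions of the fixed locus, $\widetilde{Td}(T_{\cX})|_{\cX_h}$ is a product of $Td$ of the fixed-locus tangent bundle and factors $\dfrac{1}{ch(\rho\circ\lambda_{-1}((N^{mov})^*))}$ coming from the moving part; since $\dim\cX_h<n$ for $h\neq1$, only the lower-dimensional integrals survive, and these combine to produce the correction terms. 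The key bookkeeping step is that after pushing everything forward to the coarse moduli space via the isomorphism $p_*\colon A(\cX_i)\simeq A(M_i)$, the Euler-characteristic contributions of the strata get weighted by $1/m_i$: concretely, the stratum $M_i$ with ramification order $m_i$ contributes a gerbe-like component whose ``volume'' is $\chi(M_i)/m_i$ once one accounts for the automorphism group of generic order $m_i$, so that summing all contributions yields $\int_{\cX}c_n=\sum_i\chi(M_i)/m_i$.

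The main obstacle I anticipate is organizing the contributions of all the inertia-stack components $\cX_h$ and checking that they reassemble precisely into $\sum_i \chi(M_i)/m_i$ rather than into some other weighted Euler characteristic; this requires a careful local analysis over each stratum $M_i$, identifying the relevant group elements $h$, their fixed loci, and the eigenvalue decomposition of the normal bundle, and then verifying that the To\"en--Todd correction factors, when integrated over the lower-dimensional components and pushed to $M_i$, contribute exactly the missing fraction to turn the ``orbifold-counted'' Euler characteristic into $\chi(M_i)/m_i$. In fact the cleanest route is probably to reduce to the local model $[\bD'/G]$ of Example \ref{nc}: there $\int_{[\bD'/G]}c_n$ can be computed directly from the $G$-equivariant Todd class via the Lefschetz/holomorphic-Atiyah--Bott fixed-point formalism, and one checks the identity patch by patch, then globalizes using the stratification. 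Alternatively, since the statement is attributed to \cite{To}, one may simply cite it; but the derivation above is the natural self-contained argument, with the fixed-point bookkeeping being the only delicate point.
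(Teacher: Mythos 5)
The paper offers no proof of this proposition at all: it is quoted directly from To\"en \cite{To}, so "simply cite it," which you mention only as an afterthought, is in fact the paper's entire argument. Your main self-contained derivation, however, has a genuine error at its core. Applying Theorem \ref{RR} to $E=\cO_{\cX}$ computes $\chi(X,\cO_X)$, i.e.\ an orbifold Noether formula, not Gauss--Bonnet: the degree-$n$ component of the Todd class is \emph{not} proportional to $c_n$ (already for $n=2$ one has $Td_2=\frac{1}{12}(c_1^2+c_2)$), so the untwisted-sector contribution is $\int_{\cX}Td_n(T_{\cX})$, and no amount of bookkeeping on the twisted sectors will turn that into $\int_{\cX}c_n$. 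To extract $c_n$ from Riemann--Roch you would have to apply it to $\lambda_{-1}(\Omega_{\cX})=\sum_p(-1)^p\Omega^p_{\cX}$ and invoke the Borel--Serre identity $ch(\lambda_{-1}E^*)\,Td(E)=c_{\mathrm{top}}(E)$. But then a second problem appears: the left-hand side $\sum_{p,q}(-1)^{p+q}h^q(\cX,\Omega^p_{\cX})$ equals the topological Euler characteristic of the coarse space $X$ (Hodge theory for $V$-manifolds), which is \emph{not} $\sum_i\chi(M_i)/m_i$. For instance, for $\cX=[\bP^1/(\mathbb{Z}/2)]$ with the involution $z\mapsto -z$, one has $\int_{\cX}c_1=\frac12\int_{\bP^1}c_1=1=\sum_i\chi(M_i)/m_i$, while $\chi_{\mathrm{top}}(X)=\chi(\bP^1)=2$. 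So the twisted sectors contribute nontrivially, and showing that their total contribution equals $\chi_{\mathrm{top}}(X)-\sum_i\chi(M_i)/m_i$ is essentially equivalent to the statement you are trying to prove; the Riemann--Roch route is circular-in-difficulty rather than a reduction.

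The correct self-contained argument is the one you gesture at only in your last sentence, and it needs no Riemann--Roch at all: this is Satake's original orbifold Gauss--Bonnet. Locally $\cX\simeq[U/G]$ with $G$ finite, $c_n(T_{\cX})$ is represented by the $G$-invariant Euler form on $U$, and $\int_{[U/G]}$ means $\frac{1}{|G|}\int_U$. Globalizing with a partition of unity and using the additivity of the Euler characteristic over the stratification $\{M_i\}$, together with the degree count for the map from the preimage of $M_i$ in a chart down to $M_i$ (a point of $M_i$ has exactly $|G|/m_i$ preimages), gives $\int_{\cX}c_n=\sum_i\chi(M_i)/m_i$ directly. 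I would either cite \cite{To} as the paper does, or write out this local quotient argument; the fixed-point/inertia-stack machinery is the wrong tool here.
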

 
 Then we can compute explicitely the orbifold Chern classes
 \begin{proposition}
Let  $(X,\Delta)$ be a smooth projective orbifold surface, $\Delta=\sum_{i} (1-\frac{1}{m_{i}})C_{i}$. Denote $g_{i}:=g(C_{i})$ the genus of the curve $C_{i}$ and $\overline{c}_{1}, \overline{c}_{2}$ the logarithmic Chern classes of $(X,\lceil \Delta \rceil).$ Then the \'etale orbifold Chern classes $c_{1}, c_{2}$ of the stack $\mathcal{X}$ associated to $(X,\Delta)$ verify
 \begin{eqnarray*}
c_{1}^2=\overline{c}_{1}^2-2\sum_{i=1}^{n}\frac{1}{m_{i}}(2g_{i}-2)+\sum_{i=1}^{n}\frac{C_{i}^2}{m_{i}^2}+2\sum_{1\leq i<j \leq n} \frac{C_{i}C_{j}}{m_{i}m_{j}}-2\sum_{j=1}^{n}\sum_{i=1,i\neq j}^{n}\frac{C_{i}C_{j}}{m_{j}},\\
c_{2}=\overline{c}_{2}-\sum_{i=1}^{n}\frac{1}{m_{i}}(2g_{i}-2)-\sum_{j=1}^{n}\sum_{i=1,i\neq j}^{n}\frac{C_{i}C_{j}}{m_{j}}+\sum_{1\leq i<j \leq n}\frac{C_{i}C_{j}}{m_{i}m_{j}}.
 \end{eqnarray*}
 \end{proposition}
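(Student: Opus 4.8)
The plan is to compute the orbifold Chern classes by comparing the stack $\mathcal{X}$ with the logarithmic pair $(X,\lceil\Delta\rceil)$ via the ramified covering structure encoded in the orbifold charts of Example \ref{nc}. The basic principle is that $T_{\mathcal{X}}$ is, away from the branch locus, the usual logarithmic tangent bundle $T_X(-\log\lceil\Delta\rceil)$, so on \'etale cohomology $\overline{c}_1$ and $\overline{c}_2$ get corrected by contributions localized along the $C_i$ and the intersection points $C_i\cap C_j$. Concretely, I would first establish a formula for $c_1=c_1(T_{\mathcal{X}})$. In terms of divisor classes, $-K_{\mathcal{X}}=\pi^*(-(K_X+\lceil\Delta\rceil))+\sum_i\frac{1}{m_i}C_i$ where $C_i$ is pulled back with the appropriate orbifold multiplicity; equivalently $c_1=\overline{c}_1+\sum_i\frac{1}{m_i}[C_i]$ as an \'etale cohomology class, and squaring this gives $c_1^2=\overline{c}_1^2+2\sum_i\frac{1}{m_i}\overline{c}_1\cdot C_i+\sum_{i,j}\frac{1}{m_im_j}C_i\cdot C_j$. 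One then rewrites $\overline{c}_1\cdot C_i$ using adjunction on $(X,\lceil\Delta\rceil)$: $\overline{c}_1\cdot C_i = -(K_X+\lceil\Delta\rceil)\cdot C_i = -(2g_i-2) - \sum_{j\neq i}C_i\cdot C_j$ (the logarithmic canonical restricts to $C_i$ as $K_{C_i}$ twisted by the other components meeting $C_i$). Substituting this and separating the diagonal terms $C_i^2/m_i^2$ from the off-diagonal $2C_iC_j/(m_im_j)$ yields the stated expression for $c_1^2$.

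For $c_2$, I would use the "Gauss--Bonnet" formula of Proposition \ref{GB}: $\int_{\mathcal{X}}c_2=\sum_k \chi(M_k)/m_k$ over a stratification where the ramification order is constant. The natural stratification has the open part $X\setminus\lceil\Delta\rceil$ with $m=1$, the pieces $C_i\setminus\bigcup_{j\neq i}C_j$ with $m=m_i$, and the points $C_i\cap C_j$ with order $m_im_j$ (since the stack there is $[\bD'/(\mathbb{Z}/m_i\times\mathbb{Z}/m_j)]$, as in Example \ref{nc}). Computing Euler characteristics: $\chi(X\setminus\lceil\Delta\rceil)=\chi(X)-\sum_i\chi(C_i)+\sum_{i<j}C_iC_j$ by inclusion-exclusion for a normal crossing configuration (with the convention $\chi(C_i)=2-2g_i$ counting each intersection point once on $C_i$, and correcting the double count), $\chi(C_i\setminus\bigcup_{j\neq i}C_j)=(2-2g_i)-\sum_{j\neq i}C_iC_j$, and each point contributes $1$. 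Assembling $\sum_k\chi(M_k)/m_k$ and comparing with the logarithmic Noether-type identity $\overline{c}_2=\chi_{top}(X\setminus\lceil\Delta\rceil)$ (the logarithmic $c_2$ is the Euler characteristic of the open surface), the $\chi(X)$ and plain $C_iC_j$ terms cancel and what remains is exactly $\overline{c}_2-\sum_i\frac{1}{m_i}(2g_i-2)-\sum_j\sum_{i\neq j}\frac{C_iC_j}{m_j}+\sum_{i<j}\frac{C_iC_j}{m_im_j}$, which is the claimed formula.

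The main obstacle, and the step requiring the most care, is the bookkeeping of the Euler-characteristic contributions from the double points $C_i\cap C_j$ and from the "boundary" points on each $C_i$: one must be consistent about whether the $C_i$ are assumed smooth, whether they meet transversally and pairwise (which is guaranteed by the normal crossing hypothesis on $\lceil\Delta\rceil$), and how each intersection point is simultaneously a puncture on $C_i$, a puncture on $C_j$, and a stratum of its own with ramification $m_im_j$. Getting the signs and the ramification order at these points right is precisely what produces the cross terms $-\sum_j\sum_{i\neq j}C_iC_j/m_j$ and $+\sum_{i<j}C_iC_j/(m_im_j)$; a sign error here propagates into inequality \eqref{ineq} of Theorem A. I would organize the computation so that the $c_1^2$ part is purely intersection-theoretic (divisor arithmetic plus logarithmic adjunction) and the $c_2$ part is purely topological (Proposition \ref{GB} plus inclusion-exclusion), then check consistency in the simplest non-trivial case of a single smooth curve $C$ with multiplicity $m$, where the formulas should reduce to $c_1^2=\overline{c}_1^2-\frac{2}{m}(2g-2)+\frac{C^2}{m^2}$ and $c_2=\overline{c}_2-\frac{1}{m}(2g-2)$.
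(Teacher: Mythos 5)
Your proposal is correct and follows essentially the same route as the paper: for $c_1^2$ it expands $(K_X+\Delta)^2=\bigl((K_X+\lceil\Delta\rceil)-\sum_i\tfrac{1}{m_i}C_i\bigr)^2$ and uses adjunction to evaluate $(K_X+\lceil\Delta\rceil)\cdot C_i=(2g_i-2)+\sum_{j\neq i}C_iC_j$, and for $c_2$ it applies the Gauss--Bonnet formula of Proposition \ref{GB} to exactly the stratification you describe (open complement with $m=1$, punctured curves $C_i$ with $m=m_i$, double points with $m=m_im_j$), together with $\overline{c}_2=\chi(X)-\chi(\bigcup_iC_i)$. Your single-curve consistency check is a sensible addition but the argument is otherwise the paper's own.
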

 
 \begin{proof}
 We have $$K_{\mathcal{X}}=\pi^*(K_{X}+\Delta),$$
 therefore
 \begin{eqnarray*}
 c_{1}^2=\left(K_{X}+\Delta \right)^2=\left((K_{X}+\sum_{i=1}^n C_{i})-\sum_{i=1}^n\frac{1}{m_{i}}C_{i}\right)^2\\
=\overline{c}_{1}^2-2\sum_{j=1}^{n} \frac{1}{m_{j}}(K_{X}+\sum_{i=1}^{n} C_{i})C_{j}+\left(\sum_{i=1}^n \frac{1}{m_{i}} C_{i} \right)^2.
\end{eqnarray*}
We have $K_{X}C_{j}=(2g_{j}-2)-C_{j}^2,$ therefore we obtain
 \begin{eqnarray*}
 c_{1}^2=\overline{c}_{1}^2-2\sum_{i=1}^{n}\frac{1}{m_{i}}(2g_{i}-2)+2\sum_{i=1}^{n}\frac{C_{i}^2}{m_{i}}-2\sum_{j=1}^{n}\sum_{i=1}^{n}\frac{C_{i}C_{j}}{m_{j}}+\sum_{i=1}^{n}\frac{C_{i}^2}{m_{i}^2}+2\sum_{1\leq i<j \leq n} \frac{C_{i}C_{j}}{m_{i}m_{j}}\\
 =\overline{c}_{1}^2-2\sum_{i=1}^{n}\frac{1}{m_{i}}(2g_{i}-2)+\sum_{i=1}^{n}\frac{C_{i}^2}{m_{i}^2}+2\sum_{1\leq i<j \leq n} \frac{C_{i}C_{j}}{m_{i}m_{j}}-2\sum_{j=1}^{n}\sum_{i=1,i\neq j}^{n}\frac{C_{i}C_{j}}{m_{j}}.
 \end{eqnarray*}
 For $c_{2}$ we use the previous proposition \ref{GB} which gives
 \begin{eqnarray*}
c_{2}=\chi(X)-\chi\left(\bigcup_{i=1}^{n} C_{i}\right) + \sum_{i=1}^{n}\frac{1}{m_{i}}\chi\left(C_{i} \setminus \bigcup_{j=1,j \neq i}^n C_{i}\cap C_{j}\right)+\sum_{1\leq i<j\leq n}\frac{1}{m_{i}m_{j}}\chi(C_{i} \cap C_{j})\\
=\overline{c}_{2}-\sum_{i=1}^{n}\frac{1}{m_{i}}(2g_{i}-2)-\sum_{j=1}^{n}\sum_{i=1,i\neq j}^{n}\frac{C_{i}C_{j}}{m_{j}}+\sum_{1\leq i<j \leq n}\frac{C_{i}C_{j}}{m_{i}m_{j}}.
  \end{eqnarray*}
 \end{proof}
 
 As a corollary we obtain
 \begin{corollary}
 Let  $(X,\Delta)$ be a smooth projective orbifold surface of general type, i.e $K_{X}+\Delta$ is big, $\Delta=\sum_{i} (1-\frac{1}{m_{i}})C_{i}$ and $A$ an ample line bundle on $X$. Denote $g_{i}:=g(C_{i})$ the genus of the curve $C_{i}$.
If
$$\overline{c}_{1}^2-\overline{c}_{2}-\sum_{i=1}^{n}\frac{1}{m_{i}}(2g_{i}-2+\sum_{j\neq i}C_{i}C_{j}) +\sum_{1\leq i \leq j \leq n}\frac{C_{i}C_{j}}{m_{i}m_{j}}>0,$$
then $H^0(X,S^{N}\Omega_{(X,\Delta)}\otimes A^{-1})\neq 0$ for $N$ large enough.
 \end{corollary}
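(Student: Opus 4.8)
The plan is to read everything off the stack $\mathcal X$ of Example~\ref{nc} and combine asymptotic Riemann--Roch with a Bogomolov-type vanishing for $H^2$. First I would observe that the numerical hypothesis is exactly the orbifold Bogomolov inequality: subtracting the two displays of the preceding Proposition computing $c_1^2$ and $c_2$, the terms $\tfrac1{m_i}(2g_i-2)$, the double sums $\tfrac{C_iC_j}{m_j}$ and the sums $\tfrac{C_iC_j}{m_im_j}$ collapse and give
$$c_1^2-c_2=\overline{c}_1^2-\overline{c}_2-\sum_{i=1}^n\frac1{m_i}\Bigl(2g_i-2+\sum_{j\ne i}C_iC_j\Bigr)+\sum_{1\le i\le j\le n}\frac{C_iC_j}{m_im_j}>0.$$
Next, by Proposition~\ref{symstack} together with $R^q\pi_*S^N\Omega_{\mathcal X}=0$ for $q>0$, the projection formula yields $H^i(X,S^N\Omega_{(X,\Delta)}\otimes A^{-1})\cong H^i(\mathcal X,S^N\Omega_{\mathcal X}\otimes\pi^*A^{-1})$ for all $i$; and the argument proving Theorem~\ref{Bogorb} applies verbatim to the sheaf twisted by the fixed line bundle $\pi^*A^{-1}$, since this twist only perturbs the Chern character by terms of lower order in $N$. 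Hence
$$\chi\bigl(X,S^N\Omega_{(X,\Delta)}\otimes A^{-1}\bigr)=\frac{N^3}{6}(c_1^2-c_2)+O(N^2),$$
which grows like a positive multiple of $N^3$ under our assumption.

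Since $X$ is a smooth projective surface one has $h^0\ge\chi-h^2$ for this sheaf, so it suffices to show $h^2\bigl(X,S^N\Omega_{(X,\Delta)}\otimes A^{-1}\bigr)=o(N^3)$. Using the identification above, Serre duality on the smooth proper Deligne--Mumford stack $\mathcal X$ (whose dualizing sheaf is $K_{\mathcal X}=\pi^*(K_X+\Delta)$) and the identity $(S^N\Omega_{\mathcal X})^\vee=S^NT_{\mathcal X}$ (valid in characteristic zero), this group equals
$$h^0\bigl(\mathcal X,\,S^NT_{\mathcal X}\otimes K_{\mathcal X}\otimes\pi^*A\bigr).$$
So the whole statement reduces to the vanishing of this last group for $N\gg0$, which is the orbifold analogue of Bogomolov's theorem that a surface of general type carries no positive symmetric power of its tangent sheaf (twisted by a fixed bundle).

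To obtain that vanishing I would transport the classical surface argument to $\mathcal X$, which is of general type and hence non-uniruled since $K_{\mathcal X}$ is big. Choose a general member $C$ of a basepoint-free, sufficiently positive linear system pulled back from $X$ passing through a general point; then $C$ is a smooth stacky curve moving in a covering family, with $K_{\mathcal X}\cdot C>0$. The key input is that $\Omega_{\mathcal X}|_C$ is generically \emph{ample}, i.e. $\mu_{\min}(\Omega_{\mathcal X}|_C)\ge\delta>0$ for some $\delta$ depending only on the family — this is where general type, rather than mere non-uniruledness, enters (if $\Omega_{\mathcal X}|_C$ had a degree-zero quotient for the general $C$, these would glue to a quotient of $\Omega_{\mathcal X}$ contradicting $K_{\mathcal X}$ big), and setting this semi-positivity statement up over the Deligne--Mumford stack (via reduction mod $p$ and foliations as in Miyaoka, or by descent to a log resolution of $(X,\Delta)$, in the spirit of the logarithmic/orbifold generic semi-positivity of \cite{CP}) is the main obstacle. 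Granting it, $\mu_{\max}(S^NT_{\mathcal X}|_C)\le-N\delta$, so the rank-$(N+1)$ bundle $S^NT_{\mathcal X}|_C\otimes(K_{\mathcal X}\otimes\pi^*A)|_C$ has negative maximal slope as soon as $N\delta>\deg\bigl((K_{\mathcal X}+\pi^*A)|_C\bigr)$, hence no nonzero section; as $C$ sweeps out $\mathcal X$, every global section of $S^NT_{\mathcal X}\otimes K_{\mathcal X}\otimes\pi^*A$ vanishes identically, so $h^2=0$ for $N\gg0$. Combining with the first step gives $h^0\bigl(X,S^N\Omega_{(X,\Delta)}\otimes A^{-1}\bigr)\ge\frac{N^3}{6}(c_1^2-c_2)-O(N^2)>0$ for $N$ large, as claimed.
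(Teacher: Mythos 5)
Your first half is right and matches the paper: the numerical hypothesis is exactly $c_1^2(\mathcal X)-c_2(\mathcal X)>0$ by the preceding Proposition, and Theorem~\ref{Bogorb} (together with $R^q\pi_*=0$ and the fact that the twist by the fixed bundle $\pi^*A^{-1}$ only affects lower-order terms) gives $\chi=\frac{N^3}{6}(c_1^2-c_2)+O(N^2)$. You are in fact more careful than the paper about carrying the $A^{-1}$ through the Riemann--Roch step.

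The gap is in your treatment of $h^2$. You reduce, via Serre duality, to the vanishing of $H^0(\mathcal X, S^NT_{\mathcal X}\otimes K_{\mathcal X}\otimes\pi^*A)$ and then propose to prove this by a generic semipositivity/slope argument on curves sweeping out the stack. But you never establish the key input $\mu_{\min}(\Omega_{\mathcal X}|_C)\ge\delta>0$: your parenthetical argument (a degree-zero quotient on the general $C$ would ``glue'' to a quotient of $\Omega_{\mathcal X}$ contradicting bigness of $K_{\mathcal X}$) is not a proof --- upgrading Miyaoka-type semipositivity $\mu_{\min}\ge 0$ to strict positivity under the general type hypothesis is essentially Bogomolov's vanishing theorem itself, and you explicitly write ``Granting it''. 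So the argument as written assumes the hardest step. Moreover, no vanishing theorem is needed here: since $E:=S^N\Omega_{\mathcal X}$ has $E^\vee\otimes K_{\mathcal X}\cong S^N\Omega_{\mathcal X}\otimes K_{\mathcal X}^{1-N}$ on a surface, Serre duality gives $h^2(\mathcal X,S^N\Omega_{\mathcal X})=h^0(\mathcal X,S^N\Omega_{\mathcal X}\otimes K_{\mathcal X}^{1-N})$, and because $K_{\mathcal X}=\pi^*(K_X+\Delta)$ is big, multiplication by a nonzero section of $K_{\mathcal X}^{N-1}$ (which exists for $N$ large) injects this group into $H^0(\mathcal X,S^N\Omega_{\mathcal X})$. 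Hence $h^2\le h^0$, so $2h^0\ge h^0+h^2\ge \chi\ge cN^3$, and one concludes without proving $h^2=0$. This is the paper's route; replacing your Bogomolov-vanishing step by this one-line bound (applied to the $A^{-1}$-twisted sheaf, where the same multiplication trick works) closes the gap.
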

 \begin{proof}
 With the hypotheses, theorem \ref{Bogorb} gives that 
 $$h^0(\mathcal{X},S^{N}\Omega_{\mathcal{X}})+h^2(\mathcal{X},S^{N}\Omega_{\mathcal{X}})\geq cN^3$$ for some suitable positive constant $c$ and all sufficiently large integers $N$. By Serre duality $h^2(\mathcal{X},S^{N}\Omega_{\mathcal{X}})=h^0(\mathcal{X},S^{N}\Omega_{\mathcal{X}}\otimes K_{\mathcal{X}}^{1-N}),$ and as $K_{\mathcal{X}}=\pi^*(K_{X}+\Delta),$ we obtain an injection $h^2(\mathcal{X},S^{N}\Omega_{\mathcal{X}}) \hookrightarrow h^0(\mathcal{X},S^{N}\Omega_{\mathcal{X}})$. Therefore $h^0(\mathcal{X},S^{N}\Omega_{\mathcal{X}}) \geq \frac{c}{2}N^3.$ Since $h^0(\mathcal{X},S^{N}\Omega_{\mathcal{X}})=h^0(X,S^{N}\Omega_{(X,\Delta)})$, this concludes the proof.
 \end{proof}
 
Now let us recall that in \cite{Rou08} we have obtained
\begin{theorem}\label{degen}
Let  $(X,\Delta)$ be a smooth projective orbifold surface of general type, $A$ an ample line bundle on $X$ such that $H^0(X,S^N\Omega_{(X,\Delta)}\otimes A^{-1})\neq 0$ for $N$ large enough. Then there exists a proper subvariety $Y\subsetneq X$ such that every non-constant entire curve $f: \bC \rightarrow X$ which is an orbifold morphism verifies $f(\bC) \subset Y$.
\end{theorem}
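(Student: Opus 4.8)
The plan is to reduce Theorem~\ref{degen} to the classical Bogomolov--McQuillan theory for foliations on surfaces, exactly as in \cite{Rou08}, the only extra input being that the orbifold symmetric differentials $S^N\Omega_{(X,\Delta)}$ genuinely act on orbifold morphisms. First I would take a nonzero section $\omega \in H^0(X, S^N\Omega_{(X,\Delta)}\otimes A^{-1})$ for $N$ large. Via the inclusion $S^N\Omega_{(X,\Delta)}\subset S^N\Omega_X(\log\lceil\Delta\rceil)$ this is in particular a logarithmic symmetric differential with values in $A^{-1}$, and its zero locus defines (after the usual tautological lift to $\bP(\Omega_X(\log\lceil\Delta\rceil))$) either a multi-foliation or, on passing to an appropriate component, a genuine foliation $\cF$ on $X$ with logarithmic poles along $\lceil\Delta\rceil$. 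The key algebraic fact is that for any orbifold morphism $f:\bC\to(X,\Delta)$ the pullback $f^*\omega$ is a \emph{holomorphic} symmetric differential on $\bC$ (by the Remark following the definition of $S^N\Omega_{(X,\Delta)}$, since $h^*S^N\Omega_{(X,\Delta)}\subset S^N\Omega_\bD$), while $f^*A^{-1}$ is the restriction of a negative bundle; a Nevanlinna/logarithmic-derivative argument (the tautological inequality) then forces $f^*\omega\equiv 0$, i.e.\ $f$ is tangent to $\cF$ (or lands in the bad locus where $\omega$ degenerates, which is a proper subvariety we throw into $Y$).

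Next I would analyze the curve $f$ assuming it is a leaf of the foliation $\cF$. Here one invokes McQuillan's theorem \cite{McQ0}: an entire curve tangent to a holomorphic foliation on a surface of general type (or more precisely, one whose closure is a parabolic leaf) is either algebraically degenerate or gives rise to a Nevanlinna current whose intersection with the canonical class of the foliation is $\le 0$; combined with the fact that $K_X+\Delta$ big makes $N_\cF^*$ or $K_\cF$ sufficiently positive along the curve, one derives a contradiction unless $f$ is contained in an algebraic leaf. The algebraic leaves of $\cF$ form a curve $D\subset X$ (finitely many, since a foliation on a projective surface has only finitely many algebraic leaves unless it is a pencil, and the pencil case is handled by general-type-ness excluding rational/elliptic fibrations tangent to it), and we set $Y$ to be the union of $D$, the bad locus of $\omega$, and $\lceil\Delta\rceil$ itself (orbifold morphisms may meet $\lceil\Delta\rceil$ but those curves mapping into it are also controlled, being maps into lower-dimensional strata). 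This $Y$ is a proper subvariety containing $f(\bC)$ for every non-constant orbifold entire curve.

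The main obstacle is the foliation step: handling the case where $\omega$ does not directly cut out a single foliation but a symmetric differential of higher degree (a multifoliation), and handling the ``pencil'' case where $\cF$ has infinitely many algebraic leaves. For the first point one passes to the normalization of the corresponding multisection of $\bP(\Omega_X(\log\lceil\Delta\rceil))\to X$ and works there, or one uses McQuillan's extension of his argument to symmetric differentials directly; for the second, one notes that a rational or elliptic pencil tangent to $\cF$ would contradict $K_X+\Delta$ being big (the generic fiber, as an orbifold curve, would have non-positive orbifold Euler characteristic, and these sweep out $X$), so $\cF$ is of general type in the sense of foliations and McQuillan applies cleanly. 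All of this is precisely the content of \cite{Rou08} once Proposition~\ref{symstack} and the acting property of $S^N\Omega_{(X,\Delta)}$ are in hand, so the proof here is essentially a citation of \cite[Theorem~...]{Rou08} with the observation that the hypothesis $H^0(X,S^N\Omega_{(X,\Delta)}\otimes A^{-1})\neq 0$ is exactly what is needed to start that machine.

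\begin{proof}
This is Theorem~2 of \cite{Rou08}; we only indicate the structure. A nonzero section of $S^N\Omega_{(X,\Delta)}\otimes A^{-1}$ is in particular a twisted logarithmic symmetric differential, hence defines, via the tautological lift to $\bP(\Omega_X(\log\lceil\Delta\rceil))$, a (multi-)foliation $\cF$ with at worst logarithmic singularities along $\lceil\Delta\rceil$. If $f:\bC\to X$ is a non-constant orbifold entire curve with $f(\bC)\not\subset\lceil\Delta\rceil$, then by the defining property of $S^N\Omega_{(X,\Delta)}$ the pullback $f^*\omega$ lies in $S^N\Omega_\bC$; being also a section of the pullback of the negative line bundle $A^{-1}$, a logarithmic-derivative estimate (the tautological inequality of McQuillan) forces $f^*\omega\equiv 0$, so $f$ is a leaf of $\cF$ (or meets the proper subvariety where $\omega$ degenerates). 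Since $K_X+\Delta$ is big, no rational or elliptic pencil can be tangent to $\cF$, so $\cF$ is a foliation of general type, and McQuillan's theorem \cite{McQ0} shows that any entire leaf is algebraically degenerate: it is contained in the finite union $D$ of algebraic leaves. Taking $Y$ to be the union of $D$, the degeneracy locus of $\omega$, and $\lceil\Delta\rceil$ gives the desired proper subvariety.
\end{proof}
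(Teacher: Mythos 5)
Your proposal matches the paper's treatment: the paper gives no proof of Theorem~\ref{degen} at all, but simply recalls it from \cite{Rou08}, whose argument is precisely the Bogomolov--McQuillan strategy you sketch (vanishing of $f^*\omega$ via the logarithmic derivative lemma, tangency of the curve to the (multi-)foliation cut out by $\omega$ in $\bP(\Omega_X(\log\lceil\Delta\rceil))$, and McQuillan's algebraic degeneracy theorem for parabolic leaves, with the orbifold hyperbolicity of the generic fiber disposing of the pencil case). So the proposal is correct and takes essentially the same route as the paper, namely a citation of \cite{Rou08} together with the observation that the hypothesis $H^0(X,S^N\Omega_{(X,\Delta)}\otimes A^{-1})\neq 0$ is exactly the input that argument requires.
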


As an immediate corollary we obtain the theorem announced
\begin{thma}
Let  $(X,\Delta)$ be a smooth projective orbifold surface of general type, $\Delta=\sum_{i} (1-\frac{1}{m_{i}})C_{i}$. Denote $g_{i}:=g(C_{i})$ the genus of the curve $C_{i}$ and $\overline{c}_{1}, \overline{c}_{2}$ the logarithmic Chern classes of $(X,\lceil \Delta \rceil).$
If
\begin{equation*}\tag{\ref{ineq}}
\overline{c}_{1}^2-\overline{c}_{2}-\sum_{i=1}^{n}\frac{1}{m_{i}}(2g_{i}-2+\sum_{j\neq i}C_{i}C_{j}) +\sum_{1\leq i \leq j \leq n}\frac{C_{i}C_{j}}{m_{i}m_{j}}>0,
\end{equation*}
then there exists a proper subvariety $Y\subsetneq X$ such that every non-constant entire curve $f: \bC \rightarrow X$ which is an orbifold morphism verifies $f(\bC) \subset Y$.
\end{thma}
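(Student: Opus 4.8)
The plan is to deduce the statement directly from the two ingredients assembled just above: the Corollary producing global orbifold symmetric differentials, and Theorem~\ref{degen} turning such differentials into a degeneracy statement. Concretely, fix an ample line bundle $A$ on $X$. The numerical hypothesis \eqref{ineq} is precisely the hypothesis of the Corollary, so we obtain $H^0(X,S^{N}\Omega_{(X,\Delta)}\otimes A^{-1})\neq 0$ for all sufficiently large $N$. This is exactly the input required by Theorem~\ref{degen}, which then yields a proper subvariety $Y\subsetneq X$ containing the image of every non-constant orbifold entire curve $f:\bC\to X$. Since $Y$ is independent of $f$, this is the assertion of the theorem.

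It is worth recording what stands behind each of these two steps, since together they constitute the actual content. The Corollary rests on the asymptotic Riemann--Roch computation of Theorem~\ref{Bogorb}, $\chi(\mathcal{X},S^{N}\Omega_{\mathcal{X}})=\tfrac{N^3}{6}(c_1^2-c_2)+O(N^2)$, obtained from To\"en's formula on the stack $\mathcal{X}$ by discarding the lower-dimensional inertia components, combined with the explicit expression of the \'etale orbifold Chern classes $c_1^2,c_2$ in terms of $\overline{c}_1,\overline{c}_2$, the genera $g_i$ and the intersection numbers $C_iC_j$; feeding these into $c_1^2-c_2$ gives exactly the left-hand side of \eqref{ineq}. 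One then uses Serre duality on $\mathcal{X}$ together with $K_{\mathcal{X}}=\pi^*(K_X+\Delta)$ to absorb the $h^2$ term into $h^0$, and the identification $\pi_*S^{N}\Omega_{\mathcal{X}}=S^{N}\Omega_{(X,\Delta)}$ (Proposition~\ref{symstack}) with $R^q\pi_*=0$ for $q>0$ to transfer the estimate from $\mathcal{X}$ to $X$; twisting down by $A$ costs only $O(N^2)$ and so is harmless once the leading $N^3$ coefficient is positive.

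Theorem~\ref{degen} is the geometric heart of the matter and is the step I would expect to be the real obstacle if one had to reprove everything from scratch. A non-constant entire orbifold morphism $f:\bC\to(X,\Delta)$ satisfies $f^*S^{N}\Omega_{(X,\Delta)}\subset S^{N}\Omega_\bC$ by the defining property of orbifold symmetric differentials, so a section of $S^{N}\Omega_{(X,\Delta)}\otimes A^{-1}$ pulls back to an algebraic differential equation satisfied by $f$; the locus it cuts out on $X$ is a proper subvariety unless $f$ lands in the zero divisor of the section. To promote this to a single $Y$ working for all $f$ one invokes, as in \cite{Rou08}, McQuillan's techniques for entire curves tangent to holomorphic foliations (the tautological/Ahlfors inequality on the projectivized orbifold cotangent bundle, the negativity of the second Segre class for curves of general type, and the structure theory of foliations with a Zariski-dense leaf). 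Since Theorem~\ref{degen} is already available, the present theorem follows formally by the two-line argument of the first paragraph.

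\cqfd
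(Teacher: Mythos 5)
Your proposal matches the paper's argument exactly: Theorem A is obtained there as an immediate corollary by combining the Corollary (inequality \eqref{ineq} implies $H^0(X,S^{N}\Omega_{(X,\Delta)}\otimes A^{-1})\neq 0$ for large $N$) with Theorem~\ref{degen} from \cite{Rou08}. Your additional commentary on what underlies each ingredient is accurate but not needed for the deduction itself.
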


\begin{remark}
This result generalizes and implies as a particular case the corresponding theorem of \cite{Rou08} where the hypotheses were much stronger. Indeed it was needed that $g_i\geq 2$, $h^0(C_{i},\mathcal{O}_{C_{i}}(C_{i})) \neq 0$ for all $i$ and that the logarithmic Chern classes of $(X, \lceil \Delta \rceil)$ had to verify
\begin{equation*}
\overline{c_{1}}^2-\overline{c_{2}}-\sum_{i=1}^{n}\frac{1}{m_{i}}(2g_{i}-2+\sum_{j \neq i}C_{i}C_{j})>0.
\end{equation*}
\end{remark}

\begin{remark}
One can write the previous inequality \ref{ineq} in terms of Chern classes $d_1, d_2$ of $X$ and quantities involving only $K_X$ and $\Delta$. It becomes
$$d_1^2-d_2+2K_X\Delta+\Delta^2+\chi(\Delta)>0,$$
where $\chi(\Delta)=\chi\left(\sum_{i} (1-\frac{1}{m_{i}})C_{i}\right):=\sum_{i} (1-\frac{1}{m_{i}})\chi(C_{i})-\sum_{i<j} (1-\frac{1}{m_{i}})(1-\frac{1}{m_{j}})C_{i}C_j.$
\end{remark}

As an application we obtain the following theorem
\begin{theorem}
Let $C_{i}, 1\leq i \leq 2$, be two smooth curves in $X=\bP_{2}$ of degree $d_{i}\geq 4$ with normal crossings. Let $\Delta=(1-\frac{1}{m_{1}})C_{1}+(1-\frac{1}{m_{2}})C_{2}$, and $d=d_{1}+d_{2}$. If 
\begin{equation}
\label{eq1}
deg(\Delta)^2-deg(\Delta)(d+3)+d_1d_2\left(1-\frac{1}{m_1m_2}\right)+6>0
\end{equation}
then there exists a curve $D \subset X$ which contains every orbifold entire curve $f: \bC \rightarrow (X,\Delta)$.
\end{theorem}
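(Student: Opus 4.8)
The plan is to obtain this as a direct corollary of Theorem A applied to the orbifold $(X,\Delta)=(\bP^2,\Delta)$; the real work is purely computational, namely rewriting the numerical hypothesis $(\ref{ineq})$ of Theorem A in the explicit form $(\ref{eq1})$. First I would check the hypotheses of Theorem A: $X=\bP^2$ is smooth, and since $C_1$ and $C_2$ are smooth curves meeting with normal crossings, $\lceil\Delta\rceil=C_1+C_2$ is a normal crossing divisor. Writing $H$ for the hyperplane class, $K_{\bP^2}+\Delta=\deg(\Delta)\,H$, which is big as soon as $\deg(\Delta)>0$ — that is, as soon as $(\bP^2,\Delta)$ is of general type, which holds here given $d_i\ge 4$ and $m_i\ge 2$. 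So $(\bP^2,\Delta)$ is a smooth projective orbifold surface of general type, Theorem A applies, and it produces a proper subvariety $Y\subsetneq\bP^2$ containing the image of every non-constant orbifold entire curve; taking $D$ to be the union of the one-dimensional components of $Y$ gives the curve we want.

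Next I would compute the logarithmic Chern numbers of the pair $(\bP^2,C_1+C_2)$. Since $\det\Omega^1_{\bP^2}(\log(C_1+C_2))=\mathcal{O}(K_{\bP^2}+C_1+C_2)=\mathcal{O}(d-3)$, we have $\overline{c}_1^{\,2}=(d-3)^2$. For $\overline{c}_2$ I would use the standard formula $\overline{c}_2=\chi\big(\bP^2\setminus(C_1\cup C_2)\big)=\chi(\bP^2)-\chi(C_1)-\chi(C_2)+\chi(C_1\cap C_2)$, together with $g_i=\binom{d_i-1}{2}$ (so that $\chi(C_i)=2-2g_i=d_i(3-d_i)$ and $2g_i-2=d_i(d_i-3)$) and $\#(C_1\cap C_2)=C_1C_2=d_1d_2$ by B\'ezout; this gives $\overline{c}_2=3-3d+d^2-d_1d_2$, hence $\overline{c}_1^{\,2}-\overline{c}_2=d_1d_2-3d+6$. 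I would also record $C_i^2=d_i^2$ and $C_1C_2=d_1d_2$.

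The last step is to substitute into $(\ref{ineq})$ with $n=2$. Setting $s:=\frac{d_1}{m_1}+\frac{d_2}{m_2}$, so that $\deg(\Delta)=d-s$, I would observe that $\sum_{j\neq i}C_iC_j=d_1d_2$ for $i=1,2$, hence $2g_i-2+\sum_{j\neq i}C_iC_j=d_i(d_i-3)+d_1d_2=d_i(d-3)$ and $\sum_{i}\frac{1}{m_i}\big(2g_i-2+\sum_{j\neq i}C_iC_j\big)=(d-3)s$, while $\sum_{1\le i\le j\le 2}\frac{C_iC_j}{m_im_j}=\frac{d_1^2}{m_1^2}+\frac{d_1d_2}{m_1m_2}+\frac{d_2^2}{m_2^2}=s^2-\frac{d_1d_2}{m_1m_2}$. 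Then $(\ref{ineq})$ becomes $d_1d_2\big(1-\frac{1}{m_1m_2}\big)+s^2-(d-3)s-3d+6>0$, and substituting $s=d-\deg(\Delta)$ and expanding gives $s^2-(d-3)s-3d+6=\deg(\Delta)^2-\deg(\Delta)(d+3)+6$; thus $(\ref{ineq})$ is exactly $(\ref{eq1})$. Hence $(\ref{eq1})$ implies the conclusion of Theorem A for this pair, which is what had to be proved.

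I expect no genuine obstacle here — the argument is entirely bookkeeping. The points that need care are the correct identification of $\lceil\Delta\rceil$ and of the genus $g_i$, the B\'ezout count of $C_1\cap C_2$, keeping straight the index ranges ($1\le i\le j$ versus $j\neq i$) appearing in $(\ref{ineq})$, and checking that $K_{\bP^2}+\Delta$ is big so that Theorem A genuinely applies.
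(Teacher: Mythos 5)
Your proof is correct and follows essentially the same route as the paper: one checks that for $n=2$ the left-hand side of $(\ref{ineq})$ equals the left-hand side of $(\ref{eq1})$ (your computation of $\overline{c}_1^{\,2}-\overline{c}_2=d_1d_2-3d+6$ and the substitution $s=d-\deg(\Delta)$ both check out) and then invokes Theorem A. The only slip is the line $K_{\bP^2}+\Delta=\deg(\Delta)H$, which should read $(\deg(\Delta)-3)H$, big precisely when $\deg(\Delta)>3$; this is harmless since $d_i\ge 4$ and $m_i\ge 2$ give $\deg(\Delta)\ge 4>3$, so the general-type hypothesis of Theorem A is indeed satisfied.
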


\begin{proof}
First we verify that condition \ref{ineq} is satisfied. We compute everything in terms of the degrees $d_{1}\leq d_{2}$
\begin{eqnarray*}
\overline{c_{1}}^2-\overline{c_{2}}-\frac{1}{m_{1}}(2g_{1}-2+d_{1}d_{2})-\frac{1}{m_{2}}(2g_{2}-2+d_{1}d_{2})+\frac{d_1^2}{m_1^2}+\frac{d_2^2}{m_2^2}+\frac{d_1d_2}{m_1m_2}=\\
deg(\Delta)^2-deg(\Delta)(d+3)+d_1d_2\left(1-\frac{1}{m_1m_2}\right)+6.
\end{eqnarray*}
So, if condition \ref{eq1} is satisfied, we can apply theorem A and obtain the algebraic degeneracy of $f$.
\end{proof}

\begin{example}
Let $C_{i}, 1\leq i \leq 2$ be two smooth curves in $\bP_{2}$ of degree $5$ with normal crossings. Let $\Delta=(1-\frac{1}{69})C_{1}+(1-\frac{1}{69})C_{2}$. Then there exists a curve $D \subset \bP^2$ which contains every orbifold entire curve $f: \bC \rightarrow (X,\Delta)$. If the curves $C_{i}$ are very generic, then $(\bP_{2},\Delta)$ is hyperbolic (see \cite{Rou08}).
\end{example}

\section{The singular case}
We can apply the above ideas to the second class of examples, namely klt surfaces $(X,\Delta)$.

In the classical case one obtains as an immediate consequence of \cite{McQ}, \cite{McQ2}

\begin{theorem}\label{class}
Let $(X,\Delta)$ be a projective klt orbifold surface of general type and $\pi:\mathcal{X}\to X$ its associated Deligne-Mumford stack. If $$c_1^2(\mathcal{X})-c_2(\mathcal{X})>0,$$ then there exists a proper subvariety $Y\subsetneq X$ such that any non-constant classical orbifold entire curve $f:\bC \to (X,\Delta)$ is contained in $Y$.
\end{theorem}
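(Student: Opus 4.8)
The plan is to transpose the Bogomolov--McQuillan argument from smooth projective surfaces to the smooth Deligne--Mumford stack $\pi:\mathcal{X}\to X$ associated with $(X,\Delta)$. The point of passing to $\mathcal{X}$ is that, although $X$ is singular, $\mathcal{X}$ is not: since $(X,\Delta)$ is klt, the description of klt surface germs recalled in Section~2 (and Example~\ref{lc}) shows that $\mathcal{X}$ is \'etale-locally a quotient stack $[\bC^{2}/G]$ with $G\subset GL(2,\bC)$ finite, hence a smooth proper two-dimensional stack with projective coarse moduli space $X$. A non-constant classical orbifold entire curve $f:\bC\to(X,\Delta)$ is, by definition, one that admits a lift $\tilde f:\bC\to\mathcal{X}$, which is then necessarily non-constant. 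So it suffices to produce one proper closed substack $\mathcal{Y}\subsetneq\mathcal{X}$ containing the image of every such $\tilde f$: then $Y:=\pi(\mathcal{Y})$ has $\dim Y\le\dim\mathcal{Y}<2$ (as $\pi$ is quasi-finite), and $f(\bC)=\pi(\tilde f(\bC))\subset Y\subsetneq X$.

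First I would produce orbifold symmetric differentials on $\mathcal{X}$, exactly as in Section~5. The proof of Theorem~\ref{Bogorb} carries over verbatim: with $P:=\bP(\Omega_{\mathcal{X}})$ and $L:=\mathcal{O}_{P}(1)$ one has $\chi(\mathcal{X},S^{N}\Omega_{\mathcal{X}})=\chi(P,L^{\otimes N})$, and since every non-identity element of a finite subgroup of $GL(2,\bC)$ fixes a subspace of dimension $\le 1$, every component of the inertia stack of $P$ other than $P$ itself has dimension $\le 2$; To\"en's formula (Theorem~\ref{RR}) then gives $\chi(\mathcal{X},S^{N}\Omega_{\mathcal{X}})=\tfrac{N^{3}}{6}\big(c_{1}^{2}(\mathcal{X})-c_{2}(\mathcal{X})\big)+O(N^{2})$. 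From $c_{1}^{2}(\mathcal{X})-c_{2}(\mathcal{X})>0$ we get $h^{0}(\mathcal{X},S^{N}\Omega_{\mathcal{X}})+h^{2}(\mathcal{X},S^{N}\Omega_{\mathcal{X}})\ge cN^{3}$ for some $c>0$ and all large $N$; Serre duality on $\mathcal{X}$ together with $K_{\mathcal{X}}=\pi^{*}(K_{X}+\Delta)$ yields $h^{2}(\mathcal{X},S^{N}\Omega_{\mathcal{X}})=h^{0}(\mathcal{X},S^{N}\Omega_{\mathcal{X}}\otimes K_{\mathcal{X}}^{1-N})$, which embeds into $h^{0}(\mathcal{X},S^{N}\Omega_{\mathcal{X}})$ upon multiplication by a non-zero section of $K_{\mathcal{X}}^{N-1}$ (here $K_{\mathcal{X}}$ is big because $K_{X}+\Delta$ is, i.e.\ $(X,\Delta)$ is of general type). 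Hence $h^{0}(\mathcal{X},S^{N}\Omega_{\mathcal{X}})\ge\tfrac{c}{2}N^{3}$, and since restricting $S^{N}\Omega_{\mathcal{X}}$ to a member of $|\pi^{*}A|$ ($A$ ample on $X$) removes only $O(N^{2})$ sections, $H^{0}(\mathcal{X},S^{N}\Omega_{\mathcal{X}}\otimes\pi^{*}A^{-1})\ne 0$ for $N\gg 0$. Fix a non-zero such section $\omega$.

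Now I would invoke McQuillan's study of entire curves on surfaces of general type carrying a symmetric differential, \cite{McQ}, \cite{McQ2}. Regarding $\omega$ as a section of $\mathcal{O}_{\bP(T_{\mathcal{X}})}(N)\otimes\pi^{*}A^{-1}$ and applying the tautological inequality to the canonical lift of $\tilde f$ to $\bP(T_{\mathcal{X}})$ against $\omega$ shows that this lift takes values in the zero locus $D_{\omega}\subset\bP(T_{\mathcal{X}})$ of $\omega$. The vertical components of $D_{\omega}$ project to a fixed proper divisor of $\mathcal{X}$; if $\tilde f$ avoids it, $\overline{\tilde f(\bC)}$ is a horizontal (multisection) component of $D_{\omega}$, so that $\tilde f$ is, after a finite base change, a parabolic leaf of a holomorphic foliation on a smooth orbifold surface of general type, and McQuillan's Second Main Theorem for foliated surfaces together with the classification of such foliations rules out a Zariski-dense such leaf. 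In all cases the image of $\tilde f$ lies in a fixed proper closed substack $\mathcal{Y}\subsetneq\mathcal{X}$; pushing down, $Y=\pi(\mathcal{Y})\subsetneq X$ is as required.

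The one step that is not a routine transfer of Section~5 is this last one, which is precisely why the statement is phrased as a consequence of \cite{McQ}, \cite{McQ2} rather than of our Theorem~\ref{degen}: one needs McQuillan's construction of the foliation, the tautological inequality, and the Second Main Theorem for foliated surfaces to remain valid on a smooth orbifold surface --- equivalently, for a $\bQ$-surface of general type with quotient singularities satisfying $c_{1}^{2}>c_{2}$ --- which is what is established in those papers. Everything else only uses that, over a klt surface germ, the stacky charts are $[\bC^{2}/G]$ with $G\subset GL(2,\bC)$ finite, so that all higher components of the inertia stack have strictly smaller dimension and hence contribute only to the $O(N^{2})$ error in the asymptotic Riemann--Roch estimate.
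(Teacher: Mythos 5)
Your proposal is correct and follows essentially the same route as the paper: lift the classical orbifold curve to the smooth Deligne--Mumford stack $\mathcal{X}$, produce sections of $S^{N}\Omega_{\mathcal{X}}\otimes\pi^{*}A^{-1}$ from $c_{1}^{2}(\mathcal{X})-c_{2}(\mathcal{X})>0$ via To\"en's Riemann--Roch (higher inertia components contributing only $O(N^{2})$) plus Serre duality, and then invoke McQuillan's theorem on the stack to descend from degeneracy of the derivative in $\bP(T_{\mathcal{X}})$ to degeneracy of $\tilde f$ itself. The paper's proof is just a terser version of the same argument, citing \cite{McQ}, \cite{McQ2} for the final step that you spell out.
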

\begin{proof}
By definition $f:\bC \to (X,\Delta)$ lifts to $\tilde{f}:\bC \to \mathcal{X}$. Moreover $c_1^2(\mathcal{X})-c_2(\mathcal{X})>0$ implies that $H^0(\mathcal{X},S^N\Omega_{\mathcal{X}}\otimes \pi^*A^{-1})\neq 0$ for $N$ large enough where $A$ is an ample line bundle on $X$. Then there exists a proper sub-stack $Z$ of $\bP(T_{\mathcal{X}})$ which contains the image of the derivative of $\tilde{f}.$ The main theorem of \cite{McQ} then implies that $\tilde{f}$ factors through a sub-stack $Z'$ of $\mathcal{X}$.
\end{proof}

In the non-classical case we can prove

\begin{theorem}\label{nonclass}
Let $(X,\Delta)$ be a projective klt orbifold surface of general type, $\pi:\mathcal{X}\to X$ its associated Deligne-Mumford stack and $Z$ the subset of $X$ consisting of $Sing(X)$ and the locus in $X\setminus Sing(X)$ where $\lceil \Delta \rceil$ is not a divisor with only normal crossings. If $$c_1^2(\mathcal{X})-c_2(\mathcal{X})>0,$$
then there exists a proper subvariety $Y\subsetneq X$ such that any non-constant orbifold entire curve $f:\bC \to (X,\Delta)$ with the property that $f(\bC)\cap Z=\emptyset$ is contained in $Y$.
\end{theorem}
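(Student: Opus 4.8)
The plan is to mimic the proof of Theorem~\ref{class} (the classical case) but to replace the lifting argument $\bC\to\mathcal{X}$, which is unavailable for a non-classical orbifold morphism, by a direct analytic argument on $X$ using the orbifold symmetric differentials $S^N\Omega_{(X,\Delta)}=\pi_*S^N\Omega_{\mathcal{X}}$ and the fact that they restrict along $f$. First I would observe that $c_1^2(\mathcal{X})-c_2(\mathcal{X})>0$ together with Theorem~\ref{RR} and the asymptotic Riemann--Roch computation (as in Theorem~\ref{Bogorb}, whose proof only used the dimension count for the lower-dimensional inertia components and so goes through for any smooth Deligne--Mumford surface stack with quasi-projective moduli space) gives $\chi(\mathcal{X},S^N\Omega_{\mathcal{X}})=\tfrac{N^3}{6}(c_1^2-c_2)+O(N^2)$; then the Serre-duality trick of the Corollary (using $K_{\mathcal{X}}=\pi^*(K_X+\Delta)$ big, since $(X,\Delta)$ is of general type) yields a nonzero section of $S^N\Omega_{\mathcal{X}}\otimes\pi^*A^{-1}$, hence a nonzero $\omega\in H^0(X,S^N\Omega_{(X,\Delta)}\otimes A^{-1})$ for $N\gg 0$.

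Next I would run the Bogomolov--McQuillan scheme on the \emph{coarse space}, restricted to the Zariski-open $U:=X\setminus Z$, on which $X$ is smooth and $\lceil\Delta\rceil$ is a normal crossing divisor, so that over $U$ the orbifold is smooth in the sense of Example~\ref{nc} and $S^N\Omega_{(X,\Delta)}|_U$ is the locally free sheaf of Definition~\ref{} described by the round-up generators. An entire curve $f:\bC\to(X,\Delta)$ with $f(\bC)\cap Z=\emptyset$ is then an orbifold morphism into the smooth orbifold $(U,\Delta|_U)$, so by the characterizing property of orbifold symmetric differentials (the Remark after Definition~\ref{}, i.e. $f^*S^N\Omega_{(U,\Delta|_U)}\subset S^N\Omega_{\bC}$) the pulled-back section $f^*\omega$ is a holomorphic section of $(T^*_{\bC})^{\otimes N}\otimes f^*A^{-1}$ that vanishes identically by the logarithmic derivative lemma / Ahlfors--Schwarz argument exactly as in the proof of Theorem~\ref{degen}. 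Thus $f'$ lands in the vanishing divisor of $\omega$ inside $\bP(\Omega_{(X,\Delta)})|_U$; here I would invoke Theorem~\ref{degen} essentially verbatim, applied to any smooth projective orbifold compactification of $(U,\Delta|_U)$ (or to $(X,\lceil\Delta\rceil)$ after a log resolution over $Z$) to which $f$ still extends as an orbifold morphism, concluding that $f(\bC)$ is contained in a proper subvariety $Y\subsetneq X$.

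The main obstacle is the transition at the bad locus $Z$: one must make sure that the section $\omega$ produced on the stack $\mathcal{X}$ genuinely descends to a \emph{nonzero} section of $S^N\Omega_{(X,\Delta)}$ over all of $X$ (not merely over $U$), and that restricting to $U$ does not kill it --- this is fine because $X\setminus U=Z$ has codimension $\ge 1$ but one should check it has codimension $\ge 2$ or that $\omega$ does not vanish along its divisorial part, which is where the klt hypothesis and the precise definition $S^N\Omega_{(X,\Delta)}=\pi_*S^N\Omega_{\mathcal{X}}$ enter. The second delicate point is to verify that Theorem~\ref{degen}, stated for smooth projective orbifold surfaces, can indeed be applied after passing to a resolution: one needs that an orbifold morphism $f$ avoiding $Z$ lifts to an orbifold morphism to the resolved smooth orbifold and that bigness of $K_X+\Delta$ is preserved (it is, since the discrepancies are $>0$ in the klt case, so $K_{\tilde X}+\tilde\Delta+\sum E_i\ge\pi^*(K_X+\Delta)$ up to the effective correction). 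Once these two points are secured, the degeneracy statement follows by combining the nonvanishing with Theorem~\ref{degen}.
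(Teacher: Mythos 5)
Your proposal is correct and follows essentially the same route as the paper: produce $\omega\in H^0(X,S^N\Omega_{(X,\Delta)}\otimes A^{-1})$ from the stacky Riemann--Roch computation, then pass to a log resolution $p:\tilde X\to X$ and apply Theorem~\ref{degen} to the smooth orbifold $(\tilde X,\tilde\Delta+\sum_iE_i)$, which is of general type by the klt condition, noting that $f$ lifts to an orbifold morphism there precisely because $f(\bC)\cap Z=\emptyset$ forces $\tilde f(\bC)\cap(\cup_iE_i)=\emptyset$, and that $p^*\omega$ is the required section. The two ``delicate points'' you flag are handled exactly this way in the paper (and the first is a non-issue, since a nonzero section of a torsion-free sheaf on an irreducible variety cannot vanish on a dense open set).
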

\begin{proof}
$c_1^2(\mathcal{X})-c_2(\mathcal{X})>0$ implies that there exists $\omega \in H^0(X,S^N\Omega_{(X,\Delta)}\otimes A^{-1})\neq 0$ for $N$ large enough where $A$ is an ample line bundle on $X$. Let $p: \tilde{X} \to X$ be a resolution of the singularities of $(X,\Delta)$, so that the exceptional divisors, $E_i$ and the components of $\tilde{\Delta}$, the strict transform of $\Delta$, have normal crossings and
$$K_{\tilde{X}}+\tilde{\Delta}+\sum_iE_i=p^*(K_X+\Delta)+\sum_ia_iE_i.$$
Let $\tilde{f}: \bC \to \tilde{X}$ be the lifting of $f$. Then $\tilde{f}$ is an orbifold map into $(\tilde{X},\tilde{\Delta}+\sum_iE_i)$ since $f(\bC)\cap Z=\emptyset$ which implies $\tilde{f}(\bC)\cap (\cup_iE_i)=\emptyset$. But, since $(X,\Delta)$ is klt, $(\tilde{X},\tilde{\Delta}+\sum_iE_i)$ is of general type. Moreover $p^*\omega \in H^0(\tilde{X},S^N\Omega_{(\tilde{X},\tilde{\Delta}+\sum_iE_i)}\otimes p^*A^{-1}).$ To finish the proof we just have to apply theorem \ref{degen} to the smooth orbifold of general type $(\tilde{X},\tilde{\Delta}+\sum_iE_i)$.
\end{proof}

In fact, the proof shows that we can do better, namely we can shrink the locus $Z$. Indeed, write the ramification formula
$$K_{\tilde{X}}+\tilde{\Delta}=p^*(K_X+\Delta)+\sum a(E; X, \Delta)E,$$
where $a(E; X, \Delta)$, which is independent of $p$ (see \cite{Mat}), is called the discrepancy of $(X,\Delta)$ at $E$ and $p(E)$ is called the center of $E$ on $X$ denoted by $Center_{X}(E).$ Then the previous proof immediately generalizes as
\begin{theorem}\label{nonclass2}
Let $(X,\Delta)$ be a projective klt orbifold surface of general type, $\pi:\mathcal{X}\to X$ its associated Deligne-Mumford stack, $Z$ the subset of $X$ consisting of $Sing(X)$ and the locus in $X\setminus Sing(X)$ where $\lceil \Delta \rceil$ is not a divisor with only normal crossings and $Z'$ the non-canonical locus, i.e $Z'=\{Center_{X}(E) / a(E; X, \Delta)<0\}$. If $$c_1^2(\mathcal{X})-c_2(\mathcal{X})>0,$$
then there exists a proper subvariety $Y\subsetneq X$ such that any non-constant orbifold entire curve $f:\bC \to (X,\Delta)$ with the property that $f(\bC)\cap Z \cap Z'=\emptyset$ is contained in $Y$.
\end{theorem}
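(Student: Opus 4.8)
The plan is to re-run the argument proving Theorem~\ref{nonclass}, but forcing into the boundary only the exceptional divisors with negative discrepancy. As there, the inequality $c_1^2(\mathcal{X})-c_2(\mathcal{X})>0$ yields a nonzero section $\omega\in H^0(X,S^{N}\Omega_{(X,\Delta)}\otimes A^{-1})$ for $N$ large, $A$ an ample line bundle on $X$. Choose a log resolution $p:\tilde{X}\to X$ of $(X,\Delta)$ that is an isomorphism over $X\setminus Z$ (legitimate because $(X,\Delta)$ is already smooth with normal crossing support there), so that every $p$-exceptional divisor $E_i$ satisfies $p(E_i)\subset Z$, and write
$$K_{\tilde{X}}+\tilde{\Delta}=p^*(K_X+\Delta)+\sum_i a(E_i;X,\Delta)E_i.$$
Put $S:=\{\,i : a(E_i;X,\Delta)<0\,\}$. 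By definition of $Z'$ one has $p(E_i)\subset Z'$ for $i\in S$, hence $\bigcup_{i\in S}p(E_i)\subset Z\cap Z'$.

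Next I would verify that $(\tilde{X},\tilde{\Delta}+\sum_{i\in S}E_i)$ is a smooth projective orbifold surface of general type. Its boundary has normal crossing support since $p$ is a log resolution, and
$$K_{\tilde{X}}+\tilde{\Delta}+\sum_{i\in S}E_i=p^*(K_X+\Delta)+\sum_{i\notin S}a(E_i;X,\Delta)E_i+\sum_{i\in S}\bigl(a(E_i;X,\Delta)+1\bigr)E_i;$$
both exceptional sums are effective, the first because $a(E_i;X,\Delta)\ge 0$ for $i\notin S$ and the second because $(X,\Delta)$ is klt, so $a(E_i;X,\Delta)>-1$. As $K_X+\Delta$ is big, adding an effective exceptional divisor to its pullback keeps it big, whence general type. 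Moreover $p^*\omega$ is a nonzero section of $S^{N}\Omega_{(\tilde{X},\tilde{\Delta}+\sum_{i\in S}E_i)}\otimes p^*A^{-1}$: over $X\setminus Z$ this is the identity statement; along the $E_i$ with $i\in S$, which enter with coefficient one, the sheaf is the full sheaf of logarithmic symmetric differentials along $E_i$ and so contains $p^*S^{N}\Omega_{(X,\Delta)}$; and along the remaining exceptional divisors the inequalities $a(E_i;X,\Delta)\ge 0$ guarantee that $p^*S^{N}\Omega_{(X,\Delta)}$ acquires no pole.

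Now let $\tilde{f}:\bC\to\tilde{X}$ be the lift of $f$. Since $f(\bC)\cap Z\cap Z'=\emptyset$ and $\bigcup_{i\in S}p(E_i)\subset Z\cap Z'$, the curve $\tilde{f}(\bC)$ avoids $\bigcup_{i\in S}E_i$, so the only boundary components it can meet are components of $\tilde{\Delta}$. Where it meets $\tilde{\Delta}$ over $X\setminus Z$, the map $p$ is an isomorphism and the ramification conditions defining an orbifold morphism are inherited from $f$. Where it meets $\tilde{\Delta}$ over a point of $Z\setminus Z'$, the germ of $(X,\Delta)$ is klt and canonical, and using the presentation of a klt surface germ as a quotient $\bC^2\to\bC^2/G$ together with the nonnegativity of all exceptional discrepancies there, one checks that the lift of an orbifold morphism into $(X,\Delta)$ is still an orbifold morphism into $\tilde{\Delta}$. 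Hence $\tilde{f}$ is an orbifold morphism into $(\tilde{X},\tilde{\Delta}+\sum_{i\in S}E_i)$, Theorem~\ref{degen} produces a proper $\tilde{Y}\subsetneq\tilde{X}$ with $\tilde{f}(\bC)\subset\tilde{Y}$, and $Y:=p(\tilde{Y})\subsetneq X$ finishes the proof.

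The step I expect to be the real obstacle is the last verification above: that replacing a canonical point of $(X,\Delta)$ by its resolution preserves the orbifold-morphism property (and, correspondingly, does not enlarge $p^*S^N\Omega_{(X,\Delta)}$) even though the corresponding exceptional divisors are left out of the boundary. This is precisely where the condition $a(E;X,\Delta)\ge 0$ on the canonical locus enters, and it is the reason that $Z\cap Z'$, rather than $Z$, is the locus that must be avoided; the remaining steps are a routine transcription of the proof of Theorem~\ref{nonclass}.
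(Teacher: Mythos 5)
Your overall architecture --- pull back to a log resolution, keep only the negative-discrepancy exceptional divisors in the boundary, and feed the resulting smooth orbifold of general type into Theorem~\ref{degen} --- is the same as the paper's (which only asserts that the proof of Theorem~\ref{nonclass} ``immediately generalizes''). But the step you dispose of most quickly is the one that fails as stated: the claim that $a(E_i;X,\Delta)\ge 0$ forces $p^*S^{N}\Omega_{(X,\Delta)}$ to acquire no pole along $E_i$. Discrepancies control only the determinant $K_X+\Delta$, not the full symmetric powers of the cotangent sheaf. Concretely, take an $A_1$-point of $X$ with $\Delta=0$ near it (exactly the situation of the nodal-surface application, where $Z'=\emptyset$ and the curve is allowed to cross the nodes): locally $X=\bC^2/\{\pm 1\}$, the invariant differential $(dx)^2$ is a local section of $S^2\Omega_{(X,\Delta)}=\pi_*S^2\Omega_{\mathcal{X}}$, and on the minimal resolution, in the chart $\sigma=x^2$, $t=y/x$, it becomes $(d\sigma)^2/4\sigma$, which has a pole of order one along the exceptional curve $E=\{\sigma=0\}$ even though $a(E;X,0)=0$. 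So $p^*\omega$ is in general only a section of $S^{N}\Omega_{(\tilde{X},\tilde{\Delta}+\sum_i E_i)}$ (all exceptional divisors with coefficient $1$), not of $S^{N}\Omega_{(\tilde{X},\tilde{\Delta}+\sum_{i\in S}E_i)}$, and your application of Theorem~\ref{degen} to the latter orbifold is not justified.

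The repair is not to reinstate the canonical $E_i$ with coefficient $1$ (then $\tilde{f}$ would have to avoid them, which is precisely what can no longer be assumed), but to give them suitable finite orbifold multiplicities: in the example, $(d\sigma)^2/4\sigma=\frac{\sigma}{4}\left(\frac{d\sigma}{\sigma}\right)^2$ is a section of $S^2\Omega_{(\tilde{X},(1-\frac{1}{2})E)}$, and one checks from $f^*S^{N}\Omega_{(X,\Delta)}\subset S^N\Omega_{\bC}$ that the lift $\tilde{f}$ automatically ramifies to order at least $2$ along $E$, so it is an orbifold morphism for that structure. In general one must (i) determine, from the local description of $S^{N}\Omega_{(X,\Delta)}$ as $G$-invariant differentials, multiplicities on the canonical $E_i$ for which $p^*\omega$ is a section of the corresponding orbifold sheaf, and (ii) show that the defining property of a non-classical orbifold morphism forces $\tilde{f}$ to satisfy the matching ramification conditions along those $E_i$; alternatively, argue directly that $\omega(f)\equiv 0$ from the definition of orbifold morphism and then run only the McQuillan part of Theorem~\ref{degen} on $(\tilde{X},\tilde{\Delta}+\sum_{i\in S}E_i)$. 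Without one of these arguments the proof is incomplete. The rest of your write-up (bigness of $K_{\tilde{X}}+\tilde{\Delta}+\sum_{i\in S}E_i$, the inclusion $\bigcup_{i\in S}p(E_i)\subset Z\cap Z'$, and the descent of the conclusion from $\tilde{X}$ to $X$) is correct.
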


\begin{question}
Can we shrink the "bad" locus so that it becomes empty?
\end{question}

\begin{remark}
As seen in example \ref{lc}, one can generalize slightly the preceding result to the case of log cano\-nical orbifold surface $(X,\Delta)$ where all points which are not klt lie on $\lfloor \Delta \rfloor.$
\end{remark}

\section{Applications to singular orbifold surfaces of general type}
\subsection{Complements of plane curves}
Let us consider a curve $C\subset \bP^2$. We can apply the above results to obtain examples where any holomorphic map $f:\bC \to \bP^2\setminus C$ is contained in a curve $D\subset \bP^2$. Such kind of results have been obtained in \cite{CG} and \cite{GraPe} by different methods. The approach used here shows that "order $1$" techniques, i.e. symmetric differentials, can still be useful in this situation contrary to the smooth case.

Let us illustrate this in the case of a curve $C\subset \bP^2$ with ordinary double points and cusps as singularities. The orbifold $(\bP^2,\alpha C)$ is klt for $\alpha<\frac{5}{6}$ (see for example \cite{Laz}) so the orbifold $(\bP^2,\left(1-\frac{1}{5}\right)C)$ is klt and applying theorem \ref{nonclass} (or theorem \ref{class}), we obtain as announced

\begin{thmb}
Let $C \subset \bP^2$ be a curve of degree $d\geq4$ with $n$ nodes and $c$ cusps. If
$$-d^2-15d+\frac{75}{2}+\frac{1079}{96}c+6n>0,$$
then there exists a curve $D\subset \bP^2$ which contains any non-constant entire curve $f:\bC \to \bP^2\setminus C$.
\end{thmb}

\begin{proof}
Let $\mathcal{X}$ be the stack associated to the klt orbifold $(\bP^2,\left(1-\frac{1}{m}\right)C)$, $m=5$. We just have to compute $c_1^2(\mathcal{X})$ and $c_2(\mathcal{X})$.
We have
$$c_1^2(\mathcal{X})=\left(K_{\bP^2}+\left(1-\frac{1}{5}\right)C\right)^2=\left(-3+\left(1-\frac{1}{5}\right)d\right)^2=9+\frac{16}{25}d^2-\frac{24}{5}d.$$
Now we use proposition \ref{GB} to compute  $c_2(\mathcal{X})$. To do so we need to compute the order of the orbifold fundamental group at singular points of $C$. This can be found in \cite{Ulud} for example. At a node we find that this order is $m^2=25$ and at a cusp, it is equal to
$$
\frac{4}{6}\left(\frac{1}{m}-1+\frac{5}{6}\right)^{-2}=600.$$ 

Therefore we obtain
\[
\aligned
&
c_2(\mathcal{X})=\chi(\bP^2)-\chi(C)+\frac{1}{m}\chi(C\setminus Sing(C))+\frac{1}{25}n+\frac{1}{600}c\\
&=3-\left(1-\frac{1}{m}\right)\chi(C\setminus Sing(C))-\left(1-\frac{1}{25}\right)n-\left(1-\frac{1}{600}\right)c\\
&=3-\frac{4}{5}(\chi(\tilde{C})-2n-c)-\frac{24}{25}n-\frac{599}{600}c\\
&=3-\frac{4}{5}\chi(\tilde{C})+\frac{16}{25}n-\frac{119}{600}c,
\endaligned
\]
where $\tilde{C}$ is the normalization of $C$.
We have
$$g(\tilde{C})=\frac{(d-1)(d-2)}{2}-n-c,$$
therefore
$$\chi(\tilde{C})=2-2g(\tilde{C})=2n+2c-d(d-3).$$
Finally we obtain
$$c_1^2(\mathcal{X})-c_2(\mathcal{X})=\frac{4}{25}(-d^2-15d+\frac{75}{2}+\frac{1079}{96}c+6n).$$
\end{proof}

\begin{remark}
One can compare the previous result with \cite{CG} and \cite{GraPe}, where a stronger property, namely hyperbolicity, is proved but under a numerical condition which can be seen to be much more restrictive than the one obtained here. In particular, all cases of \cite{CG} and \cite{GraPe} must verify $d\geq 9$, which is not the case above.
\end{remark}

\subsection{Singular surfaces}
Let us consider a nodal hypersurface $X\subset \bP^3$, i.e. its singularities are ordinary double points. Hyperbolic properties of such surfaces have been studied in \cite{BoDe}. Here, applying theorem \ref{nonclass2}, we obtain 

\begin{theorem}
Let $X\subset \bP^3$ be a nodal surface of general type of degree $d$ with $l$ nodes. If
$$l>\frac{8}{3}\left(d^2-\frac{5}{2}d\right),$$
then there exists a proper subvariety $Y \subset X$ which contains every non-constant orbifold entire curve $f:\bC \to X.$
\end{theorem}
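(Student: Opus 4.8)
The plan is to view $X$ itself as a klt orbifold surface, taking the boundary to be $\Delta=0$, and to apply Theorem \ref{nonclass2}. Near each of its $l$ nodes, $X$ is analytically isomorphic to $\bC^2/\{\pm\mathrm{Id}\}$, the quotient of $\bC^2$ by $\mathbb{Z}/2\mathbb{Z}$; thus $(X,0)$ is covered by orbifold charts, and for the minimal resolution $p:\tilde X\to X$ — which is crepant, $K_{\tilde X}=p^{*}K_X$, since the nodes are $A_1$ (rational double) points — the formula $K_{\tilde X}+\sum_iE_i=p^{*}K_X+\sum_ia_iE_i$ forces $a_i=1>0$ on each of the $l$ exceptional $(-2)$-curves. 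So $(X,0)$ is klt; it is of general type because $K_X=\cO_X(d-4)$ is ample for $d\ge 5$ (implicit in the hypothesis that $X$ is of general type). The crucial point is that nodes are \emph{canonical}: one has $a(E;X,0)=0$ for every exceptional $E$, so the non-canonical locus $Z'$ appearing in Theorem \ref{nonclass2} is empty, and the requirement ``$f(\bC)\cap Z\cap Z'=\emptyset$'' there is automatically met for every $f$. Hence it will suffice to verify $c_1^2(\mathcal{X})-c_2(\mathcal{X})>0$ for the associated Deligne--Mumford stack $\pi:\mathcal{X}\to X$.

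The first Chern number is immediate: since $K_{\mathcal{X}}=\pi^{*}(K_X+\Delta)=\pi^{*}\cO_X(d-4)$ and the hyperplane class satisfies $H^2=d$ on $X$, we get $c_1^2(\mathcal{X})=(K_X)^2=d(d-4)^2=d^3-8d^2+16d$.

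For $c_2(\mathcal{X})$ I would use the stacky Gauss--Bonnet formula of Proposition \ref{GB}, stratifying $X$ into the smooth locus $X\setminus\mathrm{Sing}(X)$, over which $\pi$ is an isomorphism (order of ramification $1$), and the $l$ nodes, at each of which the local orbifold group is $\mathbb{Z}/2\mathbb{Z}$ (order of ramification $2$). This yields
\[
c_2(\mathcal{X})=\chi\bigl(X\setminus\mathrm{Sing}(X)\bigr)+\frac{l}{2}=\chi(X)-l+\frac{l}{2}=\chi(X)-\frac{l}{2}.
\]
To evaluate $\chi(X)$ I would again use $p:\tilde X\to X$: topologically $\tilde X$ replaces the $l$ singular points by $l$ disjoint copies of $\bP^1$, so $\chi(\tilde X)=\chi(X)+l$; and since $A_1$-singularities are rational and Gorenstein with crepant resolution, $K_{\tilde X}^2=K_X^2=d(d-4)^2$ and $\chi(\cO_{\tilde X})=\chi(\cO_X)$ coincides with the corresponding invariant of a smooth degree-$d$ surface in $\bP^3$, so Noether's formula gives $\chi(\tilde X)=\int_{\tilde X}c_2(\tilde X)=d^3-4d^2+6d$. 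Hence $\chi(X)=d^3-4d^2+6d-l$ and $c_2(\mathcal{X})=d^3-4d^2+6d-\tfrac{3l}{2}$.

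Subtracting,
\[
c_1^2(\mathcal{X})-c_2(\mathcal{X})=-4d^2+10d+\frac{3l}{2},
\]
which is positive precisely when $l>\tfrac{8}{3}\bigl(d^2-\tfrac{5}{2}d\bigr)$; Theorem \ref{nonclass2} then furnishes the proper subvariety $Y\subset X$ containing every non-constant orbifold entire curve. I expect the computations themselves to be routine; the steps that need care are the verification that $(X,0)$ is klt with \emph{empty} non-canonical locus — so that the auxiliary ``bad locus'' of Theorem \ref{nonclass2} does not interfere — and the Euler-characteristic bookkeeping for the singular surface $X$, in particular using the correct ramification order $2$ at a node of $X$ rather than the larger orders that occur for curve nodes in the plane-curve applications.
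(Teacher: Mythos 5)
Your proposal is correct and follows essentially the same route as the paper: apply Theorem \ref{nonclass2} to the klt orbifold $(X,0)$, note the nodes are canonical so $Z\cap Z'=\emptyset$, compute $c_1^2(\mathcal{X})=K_X^2$ and $c_2(\mathcal{X})=\chi(X\setminus\mathrm{Sing}(X))+\tfrac{l}{2}=c_2(\tilde X)-\tfrac{3l}{2}$ via Proposition \ref{GB} and the minimal resolution. The only (immaterial) divergence is that you obtain $c_2(\tilde X)=d^3-4d^2+6d$ from Noether's formula and the rationality of the $A_1$-points, whereas the paper reads it off by viewing $\tilde X$ as a degeneration of smooth degree-$d$ hypersurfaces.
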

\begin{proof}
First observe that the singularities are canonical (i.e. $a(E;X)\geq 0$ for $E$ exceptional divisors appearing in a resolution of singularities) so in the notations of theorem \ref{nonclass2} we have $Z\cap Z'=\emptyset$ and so no restrictions on orbifold entire curves.

Let us compute $c_1^2(\mathcal{X})$ and $c_2(\mathcal{X})$ where $\pi: \mathcal{X} \to X$ is the stack associated to $X$. We have
$$K_{\mathcal{X}}=\pi^*K_X$$
and by proposition \ref{GB}
$$c_2(\mathcal{X})=\chi(X\setminus Sing(X))+\frac{l}{2}.$$

Now, consider $p:\tilde{X} \to X$ the minimal resolution of $X$. Then we have
$$K_{\tilde{X}}=p^*K_{X}.$$
So we obtain
$$c_1^2(\mathcal{X})=c_1(\tilde{X})^2$$
and
$$c_2(\mathcal{X})=\chi(\tilde{X} \setminus \cup E_i)+\frac{l}{2}=c_2(\tilde{X})-2l+\frac{l}{2}=c_2(\tilde{X})-\frac{3l}{2},$$
where the $E_i$ are the exceptional curves.
Therefore, we have
$$c_1^2(\mathcal{X})-c_2(\mathcal{X})=c_1(\tilde{X})^2-c_2(\tilde{X})+\frac{3l}{2}.$$
$\tilde{X}$ can be seen as the central fiber of a flat family $X_t \to D$ on the unit disk where the other members are smooth hypersurfaces of degree $d$, so we have
$$c_1(\tilde{X})^2-c_2(\tilde{X})=d(d-4)^2-d(d^2-4d+6)=d(10-4d).$$
And finally
$$c_1^2(\mathcal{X})-c_2(\mathcal{X})=d(10-4d)+\frac{3l}{2}=\frac{3}{2}\left(l-\frac{8}{3}\left(d^2-\frac{5}{2}d\right)\right).$$

\end{proof}

\begin{remark}
One can notice that we obtain exactly the same numerical condition as in \cite{BoDe} and, as it is observed there, there exists surfaces of degree $d\geq 6$ satisfying it but not of degree $5$, since then, the maximum number of nodes is $31$ and $33$ at least is needed. The next section will provide an alternative method to deal with entire curve on such a surface.
\end{remark}

\begin{problem}
Find a singular quintic in $\bP^3$ such that $c_1^2(\mathcal{X})-c_2(\mathcal{X})>0.$
\end{problem}

\section{Orbifold jet differentials}
\subsection{The smooth case}
Recall that if $X$ is a compact complex manifold, in \cite{GG80} Green and Griffiths have introduced the vector bundle of jet differentials of order $k$ and degree $m$, $E_{k,m}^{GG}\Omega_{X}\rightarrow X$ whose fibers are complex valued polynomials $Q(f^{\prime },f^{\prime \prime },\dots,f^{(k)})$ on the fibers of $J_{k}X$ of weight $m$ for the action of  $\mathbb{C}^{\ast }$:
\begin{equation*}
Q(\lambda f^{\prime },\lambda ^{2}f^{\prime \prime },\dots,\lambda
^{k}f^{(k)})=\lambda ^{m}Q(f^{\prime },f^{\prime \prime },\dots,f^{(k)})
\end{equation*}
for all $\lambda \in \mathbb{C}^{\ast }$ and $(f^{\prime },f^{\prime \prime
},\dots,f^{(k)})\in J_{k}X.$

If $(X,D)$ is a smooth logarithmic manifold, i.e $X$ is a complex manifold and $D=\sum_{i} D_{i}$ is a reduced normal crossing divisor, the vector bundle of logarithmic jet differentials of order $k$ and degree $m$, $E_{k,m}^{GG}\Omega_{(X,D)}\rightarrow X$, consists of polynomial operators (satisfying the same weight condition) in the derivatives of order $1, 2,\dots, k$ of $f$ and of the $\log(s_{j}(f))$ where $D_{j}=\{s_{j}=0\}$ locally (see \cite{DL96} for details).

Let $(X,\Delta)$ be a smooth orbifold. Let $(x_{1},\dots,x_{n})$ be local coordinates such that $\Delta$ has equation $$x_{1}^{(1-\frac{1}{m_{1}})}\dots x_{n}^{(1-\frac{1}{m_{n}})}=0.$$
Generalizing the definition of orbifold symmetric differentials, one may define orbifold jet differentials in the following way

\begin{definition}
For $N$ a positive integer, $E^{GG}_{k,N}\Omega_{(X,\Delta)}$ is the locally free subsheaf of $E_{k,N}^{GG}\Omega_{(X,\lceil \Delta \rceil)}$ generated by the elements
$$\prod_{1\leq i \leq n}x_{i}^{\lceil \frac{\alpha_{i,1}+\dots+k\alpha_{i,k}}{m_{i}}\rceil}\left(\frac{dx_{i}}{x_{i}}\right)^{\alpha_{i,1}}\dots\left(\frac{d^kx_{i}}{x_{i}}\right)^{\alpha_{i,k}},$$
such that $|\alpha_{1}|+2|\alpha_{2}|+\dots+k|\alpha_{k}|=N$ where $|\alpha_{i}|=\sum_{j}\alpha_{j,i}$.
\end{definition}

From this definition, it is clear that elements $\omega \in H^0(X,E^{GG}_{k,N}\Omega_{(X,\Delta)}\otimes A^{-1})$ act on orbifold morphisms $f: \bC \to (X,\Delta)$ giving holomorphic sections of $f^*A^{-1}$. More precisely, we have

\begin{theorem}
Let  $(X,\Delta)$ be a smooth compact orbifold, $A$ an ample line bundle on $X$ and $P \in H^0(X,E^{GG}_{k,N}\Omega_{(X,\Delta)}\otimes A^{-1})$. Then for any orbifold morphism $f:\bC \rightarrow (X,\Delta)$
\begin{displaymath}
P(f)\equiv 0.
\end{displaymath}
\end{theorem}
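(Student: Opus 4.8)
The plan is to imitate the classical vanishing theorem for jet differentials (the "fundamental vanishing theorem" of Green--Griffiths, Demailly, Siu--Yeung) which says that a holomorphic section of $E^{GG}_{k,N}\Omega_X \otimes A^{-1}$ with $A$ ample must vanish identically on every entire curve $f : \bC \to X$. The only new ingredient here is that our jet differential $P$ lives in the orbifold sheaf $E^{GG}_{k,N}\Omega_{(X,\Delta)}$, which sits inside the logarithmic jet sheaf $E^{GG}_{k,N}\Omega_{(X,\lceil\Delta\rceil)}$; and $f$ is not an arbitrary holomorphic map but an orbifold morphism, i.e.\ $\mathrm{mult}_x(f^*Z_i) \ge m_i$ whenever $f(x) \in Z_i$ (or $f(\bC) \cap Z_i = \emptyset$ if $m_i = \infty$). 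So the real content is: \emph{(i) the pulled-back operator $P(f)$ is a genuine holomorphic function on all of $\bC$ (no poles along $f^{-1}(\lceil\Delta\rceil)$), and then (ii) an entire holomorphic function constructed this way is forced to be constant, hence zero, by a Nevanlinna/Schwarz-type growth argument.}

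First I would address holomorphicity. Writing $P$ locally in terms of the generators $x_i^{\lceil (\alpha_{i,1}+\dots+k\alpha_{i,k})/m_i\rceil}(dx_i/x_i)^{\alpha_{i,1}}\cdots(d^k x_i/x_i)^{\alpha_{i,k}}$, pull back along $f$. If $f(x_0) \notin \lceil\Delta\rceil$ there is nothing to check. If $f(x_0) \in Z_i$, set $t = x - x_0$ and let $\nu = \mathrm{ord}_{x_0}(x_i \circ f) = \mathrm{mult}_{x_0}(f^*Z_i) \ge m_i$. The logarithmic derivative monomial $(d^j(x_i\circ f)/(x_i\circ f))$ has a pole of order at most $j$ at $x_0$, so the full monomial of weighted degree $\alpha_{i,1}+2\alpha_{i,2}+\dots+k\alpha_{i,k} =: w_i$ in the log-derivatives of $x_i\circ f$ contributes a pole of order $\le w_i$ (this is the standard logarithmic jet estimate, e.g.\ from \cite{DL96}); but it is multiplied by $(x_i\circ f)^{\lceil w_i/m_i\rceil}$, which vanishes to order $\nu\lceil w_i/m_i\rceil \ge m_i\lceil w_i/m_i\rceil \ge w_i$. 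Hence the product is holomorphic at $x_0$. Doing this for every index $i$ and every point $x_0$, and noting the $\infty$-multiplicity components are avoided, shows $P(f) \in H^0(\bC, f^*A^{-1})$ — a holomorphic section of the pullback of a negative (dually, the inverse of an ample) line bundle over $\bC$. After trivializing $f^*A$ over $\bC$, $P(f)$ becomes an honest entire function $g : \bC \to \bC$.

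Then I would run the growth argument to force $g \equiv 0$. This is where one invokes the negativity of $A^{-1}$: one compares the size of $g$ with a hermitian metric on $A$. Choose a smooth metric $h$ on $A$ with positive curvature $\omega$; then $\|P(f)\|_h^2$ is a subharmonic-type quantity controlled by the order function $T_{f}(r,\omega)$ of $f$, while, being a jet differential of weight $N$, $\|P(f)(z)\|$ is bounded (via the defining weight condition) by a polynomial expression in the derivatives $f', \dots, f^{(k)}$ divided by no pole terms. The standard argument (logarithmic derivative lemma plus the concavity of $\log$, exactly as in the compact case: see \cite{McQ0}, \cite{GG80}) yields that $\log^+ \|P(f)(z)\|$ grows slower than $T_f(r)$, while the curvature positivity of $A$ forces it to grow at least like $T_f(r)$ unless $P(f) \equiv 0$. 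Concretely: if $P(f)\not\equiv 0$, then $-\log\|P(f)\|_h^2$ is an entire subharmonic-away-from-zeros function whose Laplacian is $\pi f^*\omega \ge 0$ plus point masses, and integrating this over disks of radius $r$ and applying the lemma on logarithmic derivatives gives $T_f(r,\omega) \le S(r)$ with $S(r)$ small (i.e.\ $S(r) = O(\log r + \log T_f(r))$ outside a set of finite measure), which is absurd for a non-constant $f$. Therefore $P(f) \equiv 0$.

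\textbf{The main obstacle} is making the holomorphicity step watertight: one must verify that the ceiling exponents $\lceil (\alpha_{i,1}+\dots+k\alpha_{i,k})/m_i\rceil$ appearing in the generators of $E^{GG}_{k,N}\Omega_{(X,\Delta)}$ are precisely what is needed to cancel, after pullback by an arbitrary orbifold morphism, the worst-case pole of the corresponding logarithmic jet monomial. This is a combinatorial bookkeeping exercise with the weights $w_i = \sum_j j\,\alpha_{i,j}$ versus pole orders — straightforward but it is the step where the orbifold condition $\mathrm{mult}_x(f^*Z_i) \ge m_i$ is actually used, and it is the only place where the definition of $E^{GG}_{k,N}\Omega_{(X,\Delta)}$ (as opposed to the log jet sheaf) enters. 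Once holomorphicity is established, the Nevanlinna-theoretic part is entirely standard and transfers verbatim from the compact/logarithmic case, since $X$ is compact and $A$ is ample on $X$ itself.
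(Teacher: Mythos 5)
Your proposal is correct and follows essentially the same route as the paper: establish that $P(f)$ is a holomorphic section of $f^*A^{-1}$ (the paper treats this as immediate from the definition, whereas you spell out the pole-cancellation bookkeeping $\nu\lceil w_i/m_i\rceil \ge m_i\lceil w_i/m_i\rceil \ge w_i$, which is exactly the right computation), and then run the standard Poincar\'e--Lelong / Jensen / logarithmic-derivative-lemma argument to contradict $P(f)\not\equiv 0$. No substantive difference from the paper's proof.
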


\begin{proof}
The proof goes along the same lines as in the classical setting using the logarithmic derivative lemma (see \cite{Siu}, \cite{W99}, \cite{CP}) which we summarize for the convenience of the reader.
$P(f)$ is a holomorphic section of $f^*A^{-1}$. Suppose it does not vanish identically. Let $\omega=\Theta_{h}(A)$, then by the Poincar\'e-Lelong equation
$$i\partial \overline\partial \log ||P(f)||^2_{h^{-1}} \geq f^*\omega.$$ Therefore
$$T_{f}(r,\omega)\leq \int_{1}^{r}\frac{dt}{t} \int_{|z|<t} i\partial \overline\partial \log ||P(f)||^2_{h^{-1}}$$
and from Jensen formula
$$\int_{0}^{2\pi}\log^{+} ||P(f)||_{h^{-1}}d\theta \geq T_{f}(r,\omega)+\mathcal{O}(1).$$
Finally the logarithmic derivative lemma gives
$$\int_{0}^{2\pi}\log^{+} ||P(f)||_{h^{-1}}d\theta \leq \mathcal{O}(\log(r)+\log(T_{f}(r,\omega)))$$
outside a set of finite Lebesgue measure in $[0,+\infty[$.
This gives a contradiction.
\end{proof}

Another possibility to define orbifold jet differentials, following the philosophy of the preceding sections, is to consider the smooth Deligne-Mumford stack $\pi: \mathcal{X} \rightarrow X$, with coarse moduli space $X$. Then one can define
\begin{definition}\label{symstack}
The sheaf $E_{k,N}\Omega_{(X,\Delta)}$ of classical jet differentials is
$$E_{k,N}\Omega_{(X,\Delta)}:=\pi_{*}E_{k,N}\Omega_{\mathcal{X}}.$$
\end{definition}

The same proof as above gives
\begin{theorem}
Let  $(X,\Delta)$ be a smooth compact orbifold, $A$ an ample line bundle on $X$ and $P \in H^0(X,E_{k,N}\Omega_{(X,\Delta)}\otimes A^{-1})$. Then for any classical orbifold morphism $f:\bC \rightarrow (X,\Delta)$
\begin{displaymath}
P(f)\equiv 0.
\end{displaymath}
\end{theorem}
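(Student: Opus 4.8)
The statement to be proved is the ``classical'' analogue of the preceding theorem: if $P\in H^0(X,E_{k,N}\Omega_{(X,\Delta)}\otimes A^{-1})$ and $f:\bC\to(X,\Delta)$ is a classical orbifold morphism, then $P(f)\equiv 0$. The plan is to reduce this to the argument already given for (non-classical) orbifold jet differentials, using the fact that a classical orbifold morphism lifts to the stack. First I would invoke the definition of a classical orbifold map: $f$ lifts to $\tilde f:\bC\to\mathcal{X}$. Since $E_{k,N}\Omega_{(X,\Delta)}=\pi_*E_{k,N}\Omega_{\mathcal{X}}$ by Definition \ref{symstack}, a section $P\in H^0(X,E_{k,N}\Omega_{(X,\Delta)}\otimes A^{-1})$ pulls back under $\pi$ to a section $\tilde P\in H^0(\mathcal{X},E_{k,N}\Omega_{\mathcal{X}}\otimes\pi^*A^{-1})$, and $P(f)=\tilde P(\tilde f)$ (up to the canonical identification of $f^*A^{-1}$ with $\tilde f^*\pi^*A^{-1}$ on $\bC$, which is legitimate because $\bC$ is simply connected so the lift exists globally and the pullback bundle descends).

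\textbf{Main step.} Having lifted everything to the stack, one runs the Nevanlinna-theoretic argument verbatim. Suppose $\tilde P(\tilde f)$ does not vanish identically. Put $\omega=\Theta_h(A)$, a positive $(1,1)$-form on $X$; then $\tilde f^*\pi^*\omega=f^*\omega$ and by the Poincar\'e--Lelong equation
$$i\partial\overline\partial\log\|P(f)\|^2_{h^{-1}}\ge f^*\omega.$$
Integrating and applying the Jensen formula exactly as in the previous proof yields
$$\int_0^{2\pi}\log^+\|P(f)\|_{h^{-1}}\,d\theta\ge T_f(r,\omega)+\mathcal{O}(1),$$
while the logarithmic derivative lemma gives the opposite estimate
$$\int_0^{2\pi}\log^+\|P(f)\|_{h^{-1}}\,d\theta\le\mathcal{O}(\log r+\log T_f(r,\omega))$$
outside a set of finite measure, a contradiction since $f$ is non-constant. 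Here the only point that needs a word is that $P(f)$, built from the derivatives of $f$ and from local equations of $\lceil\Delta\rceil$, is still a well-defined holomorphic section of $f^*A^{-1}$: this is because $f$, being a \emph{classical} orbifold morphism, has contact order along each $Z_i$ an exact multiple of $m_i$, so the rational expressions $x_i^{\lceil(\alpha_{i,1}+\cdots+k\alpha_{i,k})/m_i\rceil}(d^jx_i/x_i)^{\alpha_{i,j}}$ pulled back by $f$ are holomorphic (indeed they come from genuine jet differentials on $\mathcal{X}$, where the condition ``classical'' is precisely what makes $\tilde f$ exist).

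\textbf{Expected obstacle.} There is essentially no new analytic difficulty; the logarithmic derivative lemma on $\bC$ is exactly the one used above. The only thing to be careful about is the bookkeeping relating $E_{k,N}\Omega_{\mathcal{X}}$, its pushforward $E_{k,N}\Omega_{(X,\Delta)}$, and the classical jet differentials $E_{k,N}\Omega_{(X,\lceil\Delta\rceil)}$ in which one does the Nevanlinna computation --- that is, checking that the section $P$ really does act on $\tilde f$ as a holomorphic section and that the norm $\|P(f)\|_{h^{-1}}$ is the same object whether one computes it downstairs or on the stack. Since this is a routine compatibility, I expect the proof to be, as the paper says, ``the same proof as above,'' with the single added observation that classical orbifold morphisms lift to $\mathcal{X}$ so that Definition \ref{symstack} applies. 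I would therefore write the proof in one or two sentences pointing to the previous theorem and to the lifting property, rather than repeating the estimates.

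\begin{proof}
Since $f:\bC\to(X,\Delta)$ is a classical orbifold morphism, it lifts to $\tilde f:\bC\to\mathcal{X}$. By Definition \ref{symstack}, $P$ corresponds to a section $\tilde P\in H^0(\mathcal{X},E_{k,N}\Omega_{\mathcal{X}}\otimes\pi^*A^{-1})$ with $P(f)=\tilde P(\tilde f)$ as a holomorphic section of $f^*A^{-1}$. One now repeats verbatim the argument of the preceding theorem, applied to $\tilde f$ and $\tilde P$: assuming $P(f)\not\equiv 0$, the Poincar\'e--Lelong equation, the Jensen formula and the logarithmic derivative lemma give the two incompatible estimates
$$T_f(r,\omega)+\mathcal{O}(1)\le\int_0^{2\pi}\log^+\|P(f)\|_{h^{-1}}\,d\theta\le\mathcal{O}(\log r+\log T_f(r,\omega))$$
outside a set of finite Lebesgue measure, where $\omega=\Theta_h(A)$. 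This contradicts the non-constancy of $f$, so $P(f)\equiv 0$.
\end{proof}
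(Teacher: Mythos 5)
Your proof is correct and takes essentially the same route as the paper, which simply states that ``the same proof as above gives'' the result; the one substantive ingredient you add --- that a classical orbifold morphism lifts to $\tilde f:\bC\to\mathcal{X}$, so that a section of $\pi_*E_{k,N}\Omega_{\mathcal{X}}\otimes A^{-1}$ acts on $f$ as a genuine holomorphic section of $f^*A^{-1}$ --- is exactly the compatibility the paper is implicitly relying on. The Nevanlinna-theoretic contradiction (Poincar\'e--Lelong, Jensen, logarithmic derivative lemma) is then verbatim the previous argument, as you say.
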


The situation for higher order orbifold jet differentials turns out to be different from the case of orbifold symmetric differentials. Indeed, in the order $1$ case, the key point is that orbifold symmetric differentials act on classical and non-classical orbifold morphisms. From order $2$, this is not the case anymore as we can see in the following

\begin{example}
Consider the morphism of orbifold $[\bD^{n}/G] \rightarrow \bD^n$ induced by $\pi: (y_{1},\dots,y_{n}) \rightarrow (y_{1}^{m_{1}},\dots,y_{n}^{m_{n}})$. A simple computation gives
$$\omega:=\pi_*(d^2y_i)^{m_i}=y_i\left(\frac{1}{m_i}\left(\frac{1}{m_i}-1\right)\left(\frac{dy_i}{y_i}\right)^2+\frac{1}{m_i}\left(\frac{d^2y_i}{y_i}\right)\right)^{m_i}.$$
Then one can see that if $f:\bD \to \bD^n$ is an orbifold morphism, $\omega(f)$ is not necessary holomorphic except if $mult_x(f^*(y_i=0))\geq 2m_i$ for all $x$ such that $f(x)\in (y_i=0).$
\end{example}

\subsection{The singular case}
Let us study orbifold jet differentials in the case of singular surfaces. 
Let $(X,\Delta)$ be an orbifold and consider the smooth Deligne-Mumford stack $\pi: \mathcal{X} \rightarrow X$, with coarse moduli space $X$. Then one can define, as in the smooth case,
\begin{definition}
The sheaf $E_{k,N}\Omega_{(X,\Delta)}$ of classical jet differentials is
$$E_{k,N}\Omega_{(X,\Delta)}:=\pi_{*}E_{k,N}\Omega_{\mathcal{X}}.$$
\end{definition}

Classical jet differentials act on classical orbifold morphisms and as above we have
\begin{theorem}
Let  $(X,\Delta)$ be an orbifold with $X$ compact, $A$ an ample line bundle on $X$ and $P \in H^0(X,E_{k,N}\Omega_{(X,\Delta)}\otimes A^{-1})$. Then for any classical orbifold morphism $f:\bC \rightarrow (X,\Delta)$
\begin{displaymath}
P(f)\equiv 0.
\end{displaymath}
\end{theorem}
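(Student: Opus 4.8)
The plan is to reduce the singular case to the smooth case already established, by pulling everything back to a resolution. Concretely, let $f:\bC \to (X,\Delta)$ be a classical orbifold morphism, so by definition it lifts to $\tilde f:\bC \to \mathcal X$, the Deligne--Mumford stack associated to $(X,\Delta)$. Since $\mathcal X$ is smooth, $\tilde f^*E_{k,N}\Omega_{\mathcal X}\subset E_{k,N}\Omega_\bC$ by the functoriality of jet differentials on smooth stacks (this is precisely the order-$k$ analogue of the statement $h^*S^N\Omega_{(X,\Delta)}\subset S^N\Omega_\bD$ used throughout). Hence, given $P\in H^0(X,E_{k,N}\Omega_{(X,\Delta)}\otimes A^{-1})=H^0(\mathcal X,\pi^*E_{k,N}\Omega_{\mathcal X}\otimes \pi^*A^{-1})$, the pullback $\tilde f^*P$ is a holomorphic section of $\tilde f^*\pi^*A^{-1}=f^*A^{-1}$ over $\bC$, and $P(f)$ is exactly this section.

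Next I would run the Nevanlinna-theoretic argument on $\bC$, word for word as in the proof of the smooth-orbifold theorem above. Suppose $P(f)\not\equiv 0$. Choosing a smooth hermitian metric $h$ on $A$ with curvature form $\omega=\Theta_h(A)>0$, the Poincar\'e--Lelong formula gives $i\partial\bar\partial\log\|P(f)\|^2_{h^{-1}}\ge f^*\omega$ away from the zeros of $P(f)$, whence $T_f(r,\omega)\le \int_1^r\frac{dt}{t}\int_{|z|<t} i\partial\bar\partial\log\|P(f)\|^2_{h^{-1}}$, and Jensen's formula yields $\int_0^{2\pi}\log^+\|P(f)\|_{h^{-1}}\,d\theta\ge T_f(r,\omega)+O(1)$. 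On the other hand, expanding $P(f)$ in the local coordinates and in the $\log$-derivatives of the defining equations, the logarithmic derivative lemma (as in \cite{Siu}, \cite{W99}, \cite{CP}) gives $\int_0^{2\pi}\log^+\|P(f)\|_{h^{-1}}\,d\theta\le O(\log r+\log T_f(r,\omega))$ outside a set of finite Lebesgue measure. Combining the two estimates forces $T_f(r,\omega)$ to grow sub-logarithmically, hence $f$ is constant, contradicting non-constancy; so $P(f)\equiv 0$.

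The one point needing care — and the main obstacle — is the local description of $P(f)$ in a neighbourhood of a singular point of $X$ or of $\Delta$, where $\mathcal X=[\bD'/G]$ for a finite $G\subset GL(2,\bC)$ and the coarse space is $\bC^2/G$. There one must check that a $G$-invariant jet differential on $\bD'$ descends to an expression in the $\log$-derivatives of the branch divisors on $X$ of the sort handled by the logarithmic derivative lemma; this is the analogue of the local verification in Proposition \ref{symstack}, now at order $k$ and in the presence of quotient singularities. Equivalently, one uses that $\tilde f$ maps $\bC$ to the smooth stack $\mathcal X$ and applies the logarithmic derivative lemma upstairs on a local uniformizing chart $\bD'$, observing that $T_{\tilde f}$ and $T_f$ differ only by a bounded factor coming from the degree of the covering. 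Once this local matching is in place, the argument is identical to the smooth case, so the theorem follows.
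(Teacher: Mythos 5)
Your proposal is correct and follows essentially the same route as the paper: the paper's proof is precisely ``lift the classical orbifold morphism to the smooth stack $\mathcal{X}$, so that $P(f)$ becomes a holomorphic section of $f^*A^{-1}$, and then run the Poincar\'e--Lelong / Jensen / logarithmic-derivative-lemma argument'' that it spells out in the smooth case. The only cosmetic slip is the identification $H^0(X,E_{k,N}\Omega_{(X,\Delta)}\otimes A^{-1})=H^0(\mathcal{X},E_{k,N}\Omega_{\mathcal{X}}\otimes\pi^*A^{-1})$ (coming from $E_{k,N}\Omega_{(X,\Delta)}=\pi_*E_{k,N}\Omega_{\mathcal{X}}$ and the projection formula), not a pullback $\pi^*E_{k,N}\Omega_{\mathcal{X}}$ as written.
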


We have seen above a numerical condition for the existence of global orbifold symmetric differentials on nodal surfaces, which unfortunately is not satisfied for nodal quintics. Here we have

\begin{theorem}
Let $X\subset \bP^3$ be a nodal surface of general type of degree $d$ with $l$ nodes.
\begin{enumerate}
\item If $$l>\frac{-4}{15}(d^3-18d^2+41d),$$
then $X$ has global $2$-jet differentials i.e. global sections of $E_{2,N}\Omega_{(X,\Delta)}$. More precisely
$$h^0(X,E_{2,N}\Omega_{X})\geq \left(\frac{15l}{2}+2d^3-36d^2+82d \right)\frac{N^5}{4^3.3!}+O(N^4).$$
\item If $$l>\frac{-4}{147}(18d^3-242d^2+533d)$$
then $X$ has global $3$-jet differentials i.e. global sections of $E_{3,N}\Omega_{(X,\Delta)}$. More precisely
$$h^0(X,E_{3,N}\Omega_{X})\geq \left(\frac{147l}{2}+36d^3-484d^2+1066d \right)\frac{N^7}{6^5}+O(N^6).$$
\end{enumerate}
In particular, a quintic with the maximum number of nodes, 31, has global $3$-jet differentials.
\end{theorem}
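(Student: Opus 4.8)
The plan is to carry over to jets of order $2$ and $3$ the scheme already used for nodal surfaces, replacing symmetric differentials by Green-Griffiths jet differentials. Let $\pi\colon\mathcal{X}\to X$ be the Deligne-Mumford stack associated to the orbifold $(X,0)$: an ordinary double point of a surface is an $A_1$ (hence canonical, hence klt) quotient singularity $\bC^2/(\bZ/2\bZ)$, so by Example~\ref{lc} this is an orbifold, with $\bZ/2\bZ$ stabilizer at each of the $l$ nodes and trivial stabilizer elsewhere, and $K_{\mathcal{X}}=\pi^*K_X$. By definition $E_{k,N}\Omega_{(X,\Delta)}=\pi_*E_{k,N}\Omega_{\mathcal{X}}$, and since the stabilizers are finite of order prime to the characteristic, $R^q\pi_*E_{k,N}\Omega_{\mathcal{X}}=0$ for $q>0$; hence $h^i(X,E_{k,N}\Omega_{(X,\Delta)})=h^i(\mathcal{X},E_{k,N}\Omega_{\mathcal{X}})$ for all $i$, and it suffices to bound $h^0(\mathcal{X},E_{k,N}\Omega_{\mathcal{X}})$ from below. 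Since $E_{k,N}\Omega_{\mathcal{X}}=p_*\mathcal{O}_{P_k\mathcal{X}}(N)$, where $p\colon P_k\mathcal{X}\to\mathcal{X}$ is the weighted projectivized bundle of $k$-jets (fibre $\bP(1,1,2,2,\dots,k,k)$ of dimension $2k-1$, so $\dim P_k\mathcal{X}=2k+1$) and $R^{q}p_*\mathcal{O}_{P_k\mathcal{X}}(N)=0$ for $q>0$, $N\ge 0$, one has $\chi(\mathcal{X},E_{k,N}\Omega_{\mathcal{X}})=\chi(P_k\mathcal{X},\mathcal{O}_{P_k\mathcal{X}}(N))$.

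Next I would apply To\"en's Riemann-Roch (Theorem~\ref{RR}) to $P_k\mathcal{X}$, exactly as in the proof of Theorem~\ref{Bogorb}. The inertia stack of $P_k\mathcal{X}$ is the disjoint union of the main component $P_k\mathcal{X}$ and of $l$ twisted sectors, one over each node, each of them a closed substack of the jet fibre over that node and hence of dimension $\le 2k-1$. These sectors contribute only $O(N^{2k-1})$, while the main component yields the ordinary asymptotic Riemann-Roch integral. So the leading term of $\chi(\mathcal{X},E_{k,N}\Omega_{\mathcal{X}})$ is the classical Green-Griffiths Euler characteristic of the order-$k$ jet differentials of a surface, evaluated on the orbifold Chern numbers of $\mathcal{X}$:
\[
\chi(\mathcal{X},E_{2,N}\Omega_{\mathcal{X}})=\frac{N^5}{4^3\cdot 3!}\bigl(7c_1^2(\mathcal{X})-5c_2(\mathcal{X})\bigr)+O(N^4),
\]
\[
\chi(\mathcal{X},E_{3,N}\Omega_{\mathcal{X}})=\frac{N^7}{6^5}\bigl(85c_1^2(\mathcal{X})-49c_2(\mathcal{X})\bigr)+O(N^6).
\]
These universal coefficients may be quoted from the Green-Griffiths/Demailly computation of the Euler characteristic of jet differentials on a surface, or extracted directly from the weighted-projective Riemann-Roch on the top inertia component. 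Inserting the Chern numbers obtained above for nodal surfaces, $c_1^2(\mathcal{X})=c_1(\tilde X)^2=d(d-4)^2$ and $c_2(\mathcal{X})=c_2(\tilde X)-\tfrac{3l}{2}=d(d^2-4d+6)-\tfrac{3l}{2}$, gives $7c_1^2(\mathcal{X})-5c_2(\mathcal{X})=\tfrac{15l}{2}+2d^3-36d^2+82d$ and $85c_1^2(\mathcal{X})-49c_2(\mathcal{X})=\tfrac{147l}{2}+36d^3-484d^2+1066d$.

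It remains to pass from $\chi$ to $h^0$. From $h^0-h^1+h^2=\chi$ one gets $h^0\ge\chi-h^2$, so it is enough to prove $h^2(\mathcal{X},E_{k,N}\Omega_{\mathcal{X}})=O(N^{2k})$. This is the orbifold version of the standard argument (see \cite{GG80}, \cite{Rou08}): one filters $E_{k,N}\Omega_{\mathcal{X}}$ so that the graded pieces are tensor products of symmetric powers of $\Omega_{\mathcal{X}}$ (with $K_{\mathcal{X}}$-twists); by Serre duality on $\mathcal{X}$, the $h^2$ of such a piece is the $h^0$ of the dual piece, a tensor product of symmetric powers of $T_{\mathcal{X}}$ twisted by $K_{\mathcal{X}}$, and the contribution of these is of subleading order because $(X,0)$ is of general type ($\tilde X$ is of general type by hypothesis and $K_{\mathcal{X}}=\pi^*K_X$ is big, so a surface of general type has no positive-weight symmetric differentials of its tangent sheaf); summing over the graded pieces of each weight gives $h^2=O(N^{2k})$. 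Therefore
\[
h^0\bigl(X,E_{2,N}\Omega_{(X,\Delta)}\bigr)\ge\Bigl(\tfrac{15l}{2}+2d^3-36d^2+82d\Bigr)\frac{N^5}{4^3\cdot 3!}+O(N^4),
\]
\[
h^0\bigl(X,E_{3,N}\Omega_{(X,\Delta)}\bigr)\ge\Bigl(\tfrac{147l}{2}+36d^3-484d^2+1066d\Bigr)\frac{N^7}{6^5}+O(N^6),
\]
as asserted; for $N\gg0$ these are positive precisely when $l>\tfrac{-4}{15}(d^3-18d^2+41d)$, respectively $l>\tfrac{-4}{147}(18d^3-242d^2+533d)$. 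For $d=5$ the latter reads $l>\tfrac{4540}{147}\approx 30.9$, which holds for $l=31$ (whereas the former requires $l\ge 33$), giving the final assertion about the quintic with $31$ nodes. The same estimates hold verbatim after twisting by $A^{-1}$ for an ample line bundle $A$, only the lower-order terms changing, which is what is needed for the applications to orbifold entire curves.

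\textbf{Main obstacle.} The delicate step is the last one on the singular stack: the genuine content is that $\mathcal{X}$ carries \emph{more} jet differentials than its resolution $\tilde X$ --- this is exactly the extra $+\tfrac{3l}{2}$ (times a positive universal constant) in the Chern-number combination --- so the vanishing of symmetric differentials of the tangent sheaf cannot merely be imported from a smooth surface but must be established on $\mathcal{X}$, pulling sections back to $\tilde X$ with the correct twists. The Riemann-Roch bookkeeping in the second step --- checking that the $l$ twisted sectors are genuinely of lower order, and pinning down the universal coefficients $7,5$ and $85,49$ --- is lengthy but, as the higher-jet analogue of the proof of Theorem~\ref{Bogorb}, routine.
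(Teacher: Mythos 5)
Your proposal is correct and follows essentially the same route as the paper: asymptotic Riemann--Roch on the stack with the twisted sectors over the nodes contributing only lower order, the universal Green--Griffiths coefficients $7c_1^2-5c_2$ and $85c_1^2-49c_2$, the orbifold Bogomolov vanishing for $h^2$ (the paper cites \cite{BoDe}, Proposition 2.3, for precisely the point you flag as the main obstacle), and the same substitution of $c_1^2(\mathcal{X})=d(d-4)^2$, $c_2(\mathcal{X})=d(d^2-4d+6)-\tfrac{3l}{2}$. The only cosmetic difference is that you compute $\chi$ via the weighted projectivized jet bundle, whereas the paper sums Riemann--Roch over the graded pieces $S^{l_1}\Omega\otimes\cdots\otimes S^{l_k}\Omega$ of the Green--Griffiths filtration.
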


\begin{proof}
The proof is just a generalization of the original approach of \cite{GG80} to the orbifold setting. Recall that on a complex manifold $Y$ we have a filtration of $E_{k,m}^{GG}\Omega_{Y}$ whose graded terms are
\begin{equation*}
Gr^{l}(E_{k,N}^{GG}\Omega_{Y})=S^{l_{1}}\Omega_{Y}\otimes S^{l_{2}}\Omega_{Y}\otimes ...\otimes S^{l_{k}}\Omega_{Y},
\end{equation*}
where $l:=(l_{1},l_{2},...,l_{k})\in \mathbb{N}^{k}$ verify $%
l_{1}+2l_{2}+...+kl_{k}=N.$
This enables the following Riemann-Roch computations on surfaces
$$\chi(Y,E_{2,N}^{GG}\Omega_{Y})=(7c_1^2-5c_2)\frac{N^5}{4^3.3!}+O(N^4),$$
$$\chi(Y,E_{3,N}^{GG}\Omega_{Y})=(85c_1^2-49c_2)\frac{N^7}{6^5}+O(N^6).$$
These Riemann-Roch estimations extend to the orbifold setting in the same way as described above for symmetric differentials, providing
$$\chi(\mathcal{X},E_{2,N}^{GG}\Omega_{\mathcal{X}})=(7c_1^2(\mathcal{X})-5c_2(\mathcal{X}))\frac{N^5}{4^3.3!}+O(N^4),$$
$$\chi(\mathcal{X},E_{3,N}^{GG}\Omega_{\mathcal{X}})=(85c_1^2(\mathcal{X})-49c_2(\mathcal{X}))\frac{N^7}{6^5}+O(N^6).$$
To conclude, in the case of manifolds, one applies a vanishing theorem of Bogomolov \cite{Bo79} for the $h^2$. This vanishing theorem extends to the orbifold setting as shown in \cite{BoDe} (proposition 2.3). Then one uses the explicit computations of the orbifold Chern classes described above.
\end{proof}

As a consequence we obtain
\begin{thmc}
Let $X\subset \bP^3$ be a quintic with the maximum number of nodes, 31. Then every classical orbifold entire curve satisfies an algebraic differential equation of order $3$.
\end{thmc}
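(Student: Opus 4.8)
The plan is to combine the existence result from the previous theorem with the stacky version of McQuillan's theorem already recorded as Theorem~\ref{class}. First I would observe that the quintic with $31$ nodes is of general type: its minimal resolution $\tilde X$ has $c_1(\tilde X)^2 = 5$ (the computation in \S7.2 gives $c_1(\tilde X)^2 = d(d-4)^2 = 5$ for $d=5$), so $K_{\tilde X}$ is big and nef, hence $X$ is of general type. Thus the hypotheses of the previous theorem are in force, and since $l=31$ satisfies $31 > \frac{-4}{15}(d^3-18d^2+41d)$ and $31 > \frac{-4}{147}(18d^3-242d^2+533d)$ for $d=5$ (indeed both right-hand sides are negative), we get global classical orbifold jet differentials: $H^0(X, E_{3,N}\Omega_{(X,\Delta)}\otimes A^{-1}) \neq 0$ for $N$ large, where $A$ is ample on $X$ and $(X,\Delta)$ is the canonical orbifold structure (here $\Delta = 0$ since the nodal singularities are canonical, so $\mathcal{X}\to X$ is the stack resolving the $A_1$-points).

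Next I would transport this to the stack. Pulling back along $\pi:\mathcal{X}\to X$, a nonzero $P \in H^0(X, E_{3,N}\Omega_{(X,\Delta)}\otimes A^{-1}) = H^0(\mathcal{X}, E_{3,N}\Omega_{\mathcal{X}}\otimes \pi^*A^{-1})$ gives a jet differential on $\mathcal{X}$ vanishing on an ample divisor. A classical orbifold entire curve $f:\bC\to X$ lifts by definition to $\tilde f:\bC\to\mathcal{X}$, so by the same Nevanlinna-theoretic argument as in the proof of the acting theorems above (Poincar\'e–Lelong plus the logarithmic derivative lemma, valid on the smooth Deligne–Mumford stack), $P(\tilde f)\equiv 0$. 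Hence the $3$-jet of $\tilde f$, and therefore the $3$-jet of $f$ itself, lies in the proper subvariety $\{P=0\}\subset J_3\mathcal{X}$; that is precisely the statement that $f$ satisfies an algebraic differential equation of order $3$.

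The one point that needs care — and which I expect to be the main obstacle to writing cleanly — is that the cited existence estimate is stated for $E_{k,N}\Omega$, the bundle built from the projectivized directed structure / invariant jets, while the Green–Griffiths filtration computation in the proof produces $E_{k,N}^{GG}\Omega$; one must recall that the nonvanishing of $h^0$ with the Bogomolov vanishing for $h^2$ is applied to $E_{k,N}^{GG}$ and then note that global $E_{k,N}^{GG}$-jet differentials vanishing on an ample divisor suffice to confine the jet lift, since $E_{k,N}^{GG}$-jet differentials also act on (classical) orbifold morphisms by the theorem above. A secondary subtlety is verifying $\Delta=0$: the $A_1$ singularities are exactly the quotient $\bC^2/(\pm 1)$, whose branch locus is empty, so the orbifold structure attached to $X$ in \S6 carries no boundary divisor and $\mathcal{X}$ is simply the Deligne–Mumford stack with $K_{\mathcal{X}}=\pi^*K_X$, consistent with the Chern class computation used above.

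\begin{proof}
The minimal resolution $\tilde X \to X$ of a nodal quintic satisfies $c_1(\tilde X)^2 = d(d-4)^2 = 5$ and $K_{\tilde X}=p^*K_X$ is nef and big, so $X$ is of general type. For $d=5$ the quantities $\tfrac{-4}{15}(d^3-18d^2+41d)$ and $\tfrac{-4}{147}(18d^3-242d^2+533d)$ are both negative, hence $l=31$ satisfies both inequalities of the previous theorem, and therefore $X$ has global classical $3$-jet differentials: there is a nonzero
$$P \in H^0\bigl(X, E_{3,N}\Omega_{(X,\Delta)}\otimes A^{-1}\bigr) = H^0\bigl(\mathcal{X}, E_{3,N}\Omega_{\mathcal{X}}\otimes \pi^*A^{-1}\bigr)$$
for $N$ large, where $A$ is an ample line bundle on $X$, $\Delta=0$ (the $A_1$-points are canonical and the associated orbifold $\mathcal{X}\to X$ has $K_{\mathcal{X}}=\pi^*K_X$), and the equality is Proposition~\ref{symstack} applied to jet differentials. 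Let $f:\bC\to X$ be a non-constant classical orbifold entire curve. By definition it admits a lift $\tilde f:\bC\to\mathcal{X}$. The section $P$ vanishes on the ample divisor $\{P=0\}$, so by the argument of the acting theorems above — the Poincar\'e–Lelong inequality together with the logarithmic derivative lemma, applied on the smooth Deligne–Mumford stack $\mathcal{X}$ — we get $P(\tilde f)\equiv 0$. Thus the $3$-jet of $\tilde f$ lies in the proper subvariety $\{P=0\}$ of $J_3\mathcal{X}$, and pushing forward, the $3$-jet of $f$ satisfies the algebraic differential equation given by $P$. Hence every classical orbifold entire curve on $X$ satisfies an algebraic differential equation of order $3$.
\end{proof}
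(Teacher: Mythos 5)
Your overall route is the paper's own: Theorem C is deduced by feeding the existence theorem for global classical jet differentials on nodal surfaces into the theorem that sections of $E_{k,N}\Omega_{(X,\Delta)}\otimes A^{-1}$ annihilate classical orbifold morphisms (via the lift $\tilde f:\bC\to\mathcal{X}$ and the Poincar\'e--Lelong / logarithmic derivative lemma argument). Your identification of $\Delta=0$ for the $A_1$-points and your remark about the $A^{-1}$-twist are also fine. However, the one substantive computation in the proof --- the verification of the numerical hypothesis --- is wrong as you state it. You claim that for $d=5$ both $\tfrac{-4}{15}(d^3-18d^2+41d)$ and $\tfrac{-4}{147}(18d^3-242d^2+533d)$ are negative, so that $l=31$ "satisfies both inequalities." In fact the inner quantities are negative ($d^3-18d^2+41d=-120$ and $18d^3-242d^2+533d=-1135$), so after multiplying by $-4/15$ and $-4/147$ the two thresholds are $32$ and $4540/147\approx 30.88$, both \emph{positive}. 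Consequently the $2$-jet condition $l>32$ \emph{fails} for $l=31$ --- which is precisely why the theorem asserts an equation of order $3$ rather than order $2$ --- while the $3$-jet condition $l>4540/147$ holds only by the razor-thin margin $31-30.88\approx 0.12$ (equivalently, the leading coefficient $\tfrac{147\cdot 31}{2}-2270=8.5$ is positive).

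The conclusion of Theorem C survives, because only the $3$-jet inequality is actually needed and it does hold; but as written your argument would equally "prove" an order-$2$ differential equation, which the paper's criterion does not give and which would contradict the stated point of the section. Fix the arithmetic (and drop the parenthetical claim about the $2$-jet case, or note explicitly that it fails) and the rest of your deduction coincides with the paper's intended one.
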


\bigskip
\noindent \texttt{rousseau@math.u-strasbg.fr}

\noindent D\'{e}partement de Math\'{e}matiques,

\noindent IRMA,\newline
Universit\'{e} de Strasbourg,

\noindent 7, rue Ren\'{e} Descartes,\newline
\noindent 67084 STRASBOURG CEDEX

\noindent FRANCE

\end{document}